\theoremstyle{plain}
\newtheorem{theorem}{Theorem}[section]
\newtheorem{proposition}[theorem]{Proposition}
\newtheorem{corollary}[theorem]{Corollary}
\theoremstyle{definition}
\newtheorem{definition}[theorem]{Definition}
\newtheorem{conjecture}[theorem]{Conjecture}
\newtheorem{example}[theorem]{Example}
\tikzstyle{v} = [circle, draw, inner sep=2pt, minimum size=3pt, fill=black]
\tikzstyle{l} = [rectangle, draw, rounded corners]
\DeclareMathOperator{\rank}{rank}
\title{A generalization of $q$-deformation of graphic arrangements to simplicial complexes}
\author[1]{Tongyu Nian\thanks{u487971i@ecs.osaka-u.ac.jp}}
\affil[1]{Department of Mathematics, The University of Osaka, Toyonaka, Osaka 560-0043, Japan.}
\date{}
\begin{document}

\maketitle

\section{Introduction}

The purpose of this thesis is to introduce two new kinds of hyperplane arrangements, inspired by the graphic arrangements and $q$-deformations of graphic arrangements \cite{nian2024qdeformationchromaticpolynomialsgraphical}. 

In \cite{nian2024qdeformationchromaticpolynomialsgraphical}, the authors observe a mysterious similarity between the braid arrangement and the arrangement of all hyperplanes in a vector space over the finite field of order $q$. These two arrangements are defined by the determinants of the Vandermonde and the Moore matrices, respectively. They are transformed to each other by replacing a natural number $n$ with $q^n$ (q-deformation). Following the similarity, the authors introduce $q$-deformation of graphical arrangements as certain subarrangements of the arrangement of all hyperplanes over $\mathbb{F}_q$. Then they show that many invariants of the $q$-deformation behave as $q$-deformation of invariants of the graphical arrangements including the characteristic polynomials, the Stirling numbers of the second kind, freeness of both objects, the corresponding exponents and bases of logarithmic vector fields, etc.

In this thesis, the author extends the definition of $q$-deformation to simplicial complexes, with the conjecture in \cite{nian2024qdeformationchromaticpolynomialsgraphical}. The author also investigates a special case called graphic monomial arrangement, including the characteristic polynomials and freeness with a further extension to fields with primitive roots.

The organization of this thesis is as follows. In Section \ref{prelim}, there is a brief introduction to the theory of hyperplane arrangements. In Section \ref{section2}, the author extends the definition of $q$-deformation to simplicial complexes and extend Conjecture \ref{q-deformation conjceture} to a general case. In section \ref{section1dim}, the author investigates a special class when the simplicial complex is $1$-dimensional, giving the characteristic polynomial and showing how it is related to graphic arrangement. In section \ref{section3}, the author extends the definition in section \ref{section1dim} to any field with root of unity and states results available in this category.

\section{Preliminary}\label{prelim}
In this section, we give a brief introduction to the theory of hyperplane arrangements, including basic definitions and notations, graphic arrangements, $q$-deformations and key tools of freeness. For more details of hyperplane arrangements, see \cite{orlik1992arrangements}.

A \textbf{hyperplane arrangement}, in short, arrangement, is a collection of affine subspaces of codimension $1$, called hyperplanes, in a vector space $\mathbb{K}^\ell$. If all hyperplanes in an arrangement are linear subspaces, the arrangement is called \textbf{central}.

Since each hyperplane corresponds to the set of zeros of some linear polynomial, the \textbf{defining polynomial} $Q(\mathcal{A})$ is to take the product of such polynomials corresponding to each hyperplane in the arrangement.

Let $\mathcal{A}=\{H_1,H_2,\cdots, H_n\}$ be a hyperplane arrangement, $H_I=\bigcap_{i\in I}H_i$ where $I\subseteq [n]$, then the \textbf{characteristic polynomial} of $\mathcal{A}$ is defined by
    $$\chi(\mathcal{A}, t)\coloneqq\sum_{I\subseteq [n],H_I\neq \emptyset}(-1)^{\left|I\right|}t^{\dim{H_I}}.$$

An arrangement $\mathcal{A}$ is \textbf{supersolvable} if and only if there exists a chain of subarrangements called a filtration
\begin{align*}
\varnothing=\mathcal{A}_{0} \subseteq \mathcal{A}_{1} \subseteq \mathcal{A}_{2} \subseteq \dots \subseteq \mathcal{A}_{\ell} = \mathcal{A}
\end{align*}
such that, for each $i \in [\ell]$, $\rank \mathcal{A}_{i} = i$  and for any distinct hyperplanes $H,H^{\prime} \in \mathcal{A}_{i}\setminus\mathcal{A}_{i-1}$ there exists $H^{\prime\prime}\in\mathcal{A}_{i-1}$ such that $H \cap H^{\prime} \subseteq H^{\prime\prime}$ \cite{bjorner1990hyperplane-dcg}. 
When $\mathcal{A}$ is supersolvable with the previous filtration, the characteristic polynomial can be written as the product of linear factors, with coefficients derived from the filtration, say 
\begin{align*}
\chi(\mathcal{A},t) = \prod_{i=1}^{\ell}(t-|\mathcal{A}_{i}\setminus\mathcal{A}_{i-1}|). 
\end{align*}
In other words, an arrangement $\mathcal{A}$ is supersolvable if and only if its intersection lattice is supersolvable. Here the intersection lattice is a poset on nonempty intersections of members in the arrangement with partial order given by reverse inclusion. A lattice is supersolvable if there exists a maximal chain of modular elements in the lattice\cite{stanley1972supersolvable}.

Let $\mathcal{A}=\{H_1,H_2,\cdots,H_n\}$ be a central arrangement in $V=\mathbb{K}^\ell$. Denote the polynomial ring $\mathbb{K}[x_1,x_2,\cdots,x_\ell]$ by $S$. Define the \textbf{derivation module} $\mathrm{Der}_V$ by 
    \begin{align*}
        \mathrm{Der}_V\coloneqq \{\theta:S\rightarrow S\mid \theta \text{ is a $\mathbb{K}$-linear map and }\theta(fg)=\theta(f)g+\theta(g)f\}
    \end{align*}
    where $f$ and $g$ are polynomials in $S$. Also denote a submodule
    $$D(\mathcal{A})\coloneqq\{\theta\in \mathrm{Der}_V\mid \theta(Q(\mathcal{A}))\in (Q(\mathcal{A}))\}.$$
 If $D(\mathcal{A})$ is a free module with generators $\theta_1, \theta_2,\cdots,\theta_\ell$, with $\deg \theta_i=e_i$, which means
    $$D(\mathcal{A}) = S\theta_1 \oplus S\theta_2 \oplus \cdots \oplus S\theta_\ell,$$
    we say $\mathcal{A}$ is a \textbf{free} arrangement with basis $(\theta_1, \theta_2,\cdots,\theta_\ell)$ and exponents $\exp(\mathcal{A})=(e_1,e_2,\cdots,e_\ell)$. For a free arrangement, we have the following theorem.

    \begin{theorem}[\cite{terao1981generalized}]
        If $\mathcal{A}\subset \mathbb{K}^\ell$ is free with exponents $(e_1,e_2,\cdots, e_\ell)$, then
    $\chi(\mathcal{A}, t)=\prod_{i=1}^\ell (t-e_i)$.
    \end{theorem}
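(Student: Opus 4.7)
The plan is to invoke the Solomon--Terao formula, which for any central arrangement expresses the characteristic polynomial $\chi(\mathcal{A}, t)$ as a specific specialization of the Hilbert series of the module $D(\mathcal{A})$.

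First I would compute this Hilbert series explicitly under the freeness hypothesis. Since $D(\mathcal{A}) = \bigoplus_{i=1}^\ell S\theta_i$ with $\deg \theta_i = e_i$, each summand is isomorphic to the shifted free module $S(-e_i)$, whose Hilbert series is $x^{e_i}/(1-x)^\ell$. Summing, I would obtain
\[
H(D(\mathcal{A}), x) = \frac{\sum_{i=1}^\ell x^{e_i}}{(1-x)^\ell}.
\]

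Next, feeding this explicit rational function into the Solomon--Terao formula, the pole of order $\ell$ at $x=1$ is precisely cleared by the substitution appearing in that formula, and after routine algebraic manipulation the resulting polynomial identity collapses to $\chi(\mathcal{A}, t) = \prod_{i=1}^\ell (t - e_i)$, as desired. Intuitively, each generator $\theta_i$ of degree $e_i$ should contribute exactly one linear factor $(t - e_i)$, since its presence is what distinguishes the free case from the generic one; the Solomon--Terao identity is the mechanism that implements this correspondence rigorously.

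The hard part will be the Solomon--Terao formula itself, whose proof requires nontrivial cohomological input from the theory of logarithmic de~Rham complexes on the complement of $\mathcal{A}$ (or, alternatively, from the Orlik--Solomon algebra together with a careful tracking of Hilbert series across a spectral sequence). Granting Solomon--Terao as a black box, the remainder of the argument is a direct computation with Hilbert series, and in particular does not require an inductive step over $|\mathcal{A}|$ (so no appeal to the addition--deletion theorem is needed, which is convenient because not every free arrangement is inductively free).
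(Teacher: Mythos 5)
The paper does not prove this statement; it is quoted as Terao's Factorization Theorem with only a citation to the original 1981 paper, so there is no in-paper argument to compare against. Your strategy --- deducing the factorization from the Solomon--Terao formula --- is a standard and legitimate modern route, but as written it has a genuine gap: the Solomon--Terao formula does not take $H(D(\mathcal{A}),x)$ alone as input. It reads
\[
\chi(\mathcal{A}, t) \;=\; (-1)^{\ell}\lim_{x\to 1}\sum_{p=0}^{\ell} H\bigl(D^{p}(\mathcal{A}), x\bigr)\,\bigl(t(x-1)-1\bigr)^{p},
\]
so you need the Hilbert series of every module $D^{p}(\mathcal{A})$ of logarithmic $p$-derivations, not just of $D^{1}(\mathcal{A})=D(\mathcal{A})$. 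The missing ingredient is the (nontrivial, but known) fact that freeness of $D^{1}(\mathcal{A})$ forces $D^{p}(\mathcal{A})\cong \bigwedge^{p}D^{1}(\mathcal{A})$ for all $p$, whence $H(D^{p}(\mathcal{A}),x)=\sum_{i_{1}<\dots<i_{p}}x^{e_{i_{1}}+\dots+e_{i_{p}}}/(1-x)^{\ell}$. Once you add that, the computation does collapse as you claim: the sum over $p$ factors as $\prod_{i=1}^{\ell}\bigl(1+x^{e_{i}}(t(x-1)-1)\bigr)/(1-x)^{\ell}$, each factor equals $(1-x)\bigl((1+x+\dots+x^{e_{i}-1})-tx^{e_{i}}\bigr)$, and letting $x\to1$ yields $(-1)^{\ell}\prod_{i=1}^{\ell}(t-e_{i})$, as required.

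One further caution: you describe the hard input as cohomological, via the logarithmic de Rham complex on the complement. The complement is only available (for this purpose) over $\mathbb{C}$, while the theorem as stated in the paper is over an arbitrary field $\mathbb{K}$; the proof of the Solomon--Terao formula you should invoke is the purely algebraic one, via localization of the modules $D^{p}$ at the elements of the intersection lattice and an Euler-characteristic computation against the M\"obius function, which works over any field. Your closing observation that no induction over $|\mathcal{A}|$ is needed --- and hence no appeal to inductive freeness --- is correct and is indeed one of the virtues of this route.
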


A key tool to check if a given set of derivations is a proper free basis is the Saito's criterion\cite{saito1980theory}.

\begin{theorem}[Saito's criterion]
    Let $\mathcal{A}$ be a central hyperplane arrangement in $V=\mathbb{K}^\ell$. Then $\theta_1,\theta_2,\cdots,\theta_\ell\in D(\mathcal{A})$ is a free basis if and only if $\det(\theta_i (x_j))=cQ(\mathcal{A})$, where $c$ is a nonzero constant. If the derivatives are written in $\theta_i=\sum_{j=1}^\ell f_{ij}\partial_{x_j}$, then $\det(\theta_i (x_j))=\det(f_{ij})$, this determinant is called Saito's determinant.
\end{theorem}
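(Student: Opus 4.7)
The plan is to treat both directions through one key divisibility tool: for any $\theta \in D(\mathcal{A})$ and any hyperplane $H \in \mathcal{A}$ with defining linear form $\alpha_H$, one has $\alpha_H \mid \theta(\alpha_H)$ in $S$. Since $Q(\mathcal{A}) = \prod_{H\in\mathcal{A}} \alpha_H$ is a product of pairwise coprime linear forms, the condition $\theta(Q) \in (Q)$ is equivalent to these one-hyperplane-at-a-time divisibilities. This is the only way the hypothesis $\theta_i \in D(\mathcal{A})$ will enter.

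For the forward direction, assume $\theta_1, \ldots, \theta_\ell$ is a free basis with exponents $e_1, \ldots, e_\ell$. To see $Q \mid \det(\theta_i(x_j))$, fix $H$ and perform a linear change of coordinates making $\alpha_H$ one of the new coordinates $y_1$; a short matrix computation shows the Saito determinant transforms by the determinant of the change-of-basis matrix, i.e., by a nonzero scalar, so it is equivalent to verify $\alpha_H \mid \det(\theta_i(y_j))$. By the divisibility lemma, $\alpha_H$ divides every entry of the first column of $(\theta_i(y_j))$, so $\alpha_H$ divides the determinant. Coprimality of the $\alpha_H$ gives $Q \mid \det(\theta_i(x_j))$. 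For the degree match, the Terao theorem cited above combined with the known coefficient $-|\mathcal{A}|$ of $t^{\ell-1}$ in $\chi(\mathcal{A},t) = \sum_I (-1)^{|I|} t^{\dim H_I}$ gives $e_1 + \cdots + e_\ell = |\mathcal{A}| = \deg Q$; matching this with the homogeneous degree $\sum e_i$ of the Saito determinant forces it to be a scalar multiple of $Q$. The scalar is nonzero because otherwise the $\theta_i$ would be linearly dependent over $\mathrm{Frac}(S)$, contradicting them being a rank-$\ell$ basis.

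For the converse, assume $\det(\theta_i(x_j)) = cQ$ with $c \neq 0$. Linear independence of $\{\theta_i\}$ over $S$ is immediate from $c \neq 0$. To show they generate $D(\mathcal{A})$, take any $\theta \in D(\mathcal{A})$ and solve the $\ell\times\ell$ linear system $\theta(x_j) = \sum_i g_i \theta_i(x_j)$ by Cramer's rule: the coefficients are $g_i = \Delta_i/(cQ)$, where $\Delta_i$ is obtained by replacing the $i$-th row of the Saito matrix with $(\theta(x_1), \ldots, \theta(x_\ell))$. The same coordinate-change-and-first-column argument, now using that the replaced row contains $\theta(\alpha_H)$ (divisible by $\alpha_H$ because $\theta \in D(\mathcal{A})$) together with the divisibility already enjoyed by the remaining rows, shows $Q \mid \Delta_i$. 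Therefore $g_i \in S$, which completes freeness.

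The main obstacle is the clean verification that the Saito determinant transforms by the Jacobian under linear changes of the $x_j$; this is what legitimizes the ``assume $x_1 = \alpha_H$'' trick used twice above. Once that formal check is in place, the rest of the proof is a combination of the divisibility lemma, degree comparison via Terao's theorem, and standard Cramer's rule manipulations.
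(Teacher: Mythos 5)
The paper itself gives no proof of Saito's criterion; it is quoted from the literature, so your proposal can only be judged on its own merits. Most of it is the standard argument and is correct: the reduction of $\theta(Q)\in(Q)$ to the hyperplane-wise divisibilities $\alpha_H\mid\theta(\alpha_H)$, the coordinate-change observation that $\det(\theta_i(y_j))=\det(A)\det(\theta_i(x_j))$ for a linear substitution $y=Ax$, the resulting divisibility $Q\mid\det(\theta_i(x_j))$ for \emph{any} $\ell$-tuple in $D(\mathcal{A})$, and the Cramer's-rule proof of the converse (applying that same divisibility to the tuple with $\theta_i$ replaced by $\theta$ to conclude $g_i=\Delta_i/(cQ)\in S$) are all sound.

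The genuine gap is in the forward direction, where you close the degree count by invoking Terao's factorization theorem to get $e_1+\cdots+e_\ell=|\mathcal{A}|$. This is circular: Terao's factorization theorem (and already the identity $\sum_i e_i=|\mathcal{A}|$, which is exactly the coefficient of $t^{\ell-1}$ you are extracting) is itself a consequence of Saito's criterion in every standard development --- the usual proof of $\sum e_i=|\mathcal{A}|$ \emph{is} the comparison of $\deg\det(\theta_i(x_j))$ with $\deg Q$ that you are trying to establish. A non-circular completion must show directly that the cofactor $f$ in $\det(\theta_i(x_j))=fQ$ is a nonzero constant; for instance, one compares the given basis with the derivations $Q\partial_1,\dots,Q\partial_\ell\in D(\mathcal{A})$ and localizes at each prime $(\alpha_H)$ (where $D(\mathcal{A})$ coincides with the free module $D(\{H\})$, whose Saito determinant is $\alpha_H$ exactly once) and at primes coprime to $Q$ (where $D(\mathcal{A})$ localizes to all of $\mathrm{Der}_V$), to conclude $\det(\theta_i(x_j))\doteq Q$. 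A secondary, smaller issue: the statement does not assume the $\theta_i$ homogeneous, so even a valid degree argument needs the preliminary remark that any two bases of a free module differ by a matrix in $GL_\ell(S)$, whose determinant is a nonzero constant, together with the existence of a homogeneous basis for the graded free module $D(\mathcal{A})$.
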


One can also check the freeness of specific arrangements by the celebrated Terao's addition-deletion theorem.

\begin{theorem}[Terao \cite{terao1980arrangements}]
    Let $H \in \mathcal{A}$, $\mathcal{A}' \coloneqq \mathcal{A} \setminus {H}$ and $\mathcal{A}^H \coloneqq \{L \cap H \mid L \in \mathcal{A}'\}$. Then any two of the following imply the third:
    \begin{enumerate}
        \item $\mathcal{A}$ is free with $\exp(\mathcal{A}) = (d_1, d_2 , \cdots, d_{\ell-1}, d_{\ell})$.
        \item $\mathcal{A}'$ is free with $\exp(\mathcal{A}') = (d_1, d_2 , \cdots, d_{\ell-1}, d_{\ell}-1)$.
        \item $\mathcal{A}^H$ is free with $\exp(\mathcal{A}^H) = (d_1, d_2 , \cdots, d_{\ell-1})$.
    \end{enumerate}
\end{theorem}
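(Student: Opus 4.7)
The plan is to prove Terao's theorem through the short exact sequence that links the three modules $D(\mathcal{A}')$, $D(\mathcal{A})$, and $D(\mathcal{A}^H)$. Fix a linear form $\alpha_H\in S$ with $H=\{\alpha_H=0\}$, so that $Q(\mathcal{A})=\alpha_H Q(\mathcal{A}')$. The first step is to set up the graded sequence
\[
0 \longrightarrow D(\mathcal{A}') \xrightarrow{\;\cdot \alpha_H\;} D(\mathcal{A}) \xrightarrow{\;\rho\;} D(\mathcal{A}^H),
\]
where $\cdot\alpha_H$ has degree $+1$ and $\rho$ is induced by reduction modulo $\alpha_H$. The required algebraic verifications are: (i) $\alpha_H\theta\in D(\mathcal{A})$ for $\theta\in D(\mathcal{A}')$, by a direct computation of $(\alpha_H\theta)(Q(\mathcal{A}))$; (ii) every $\theta\in D(\mathcal{A})$ satisfies $\theta(\alpha_H)\in(\alpha_H)$, since $\alpha_H$ divides $\theta(Q(\mathcal{A}))$ but not $Q(\mathcal{A}')$, so $\theta$ descends to a $\mathbb{K}$-derivation on $S/(\alpha_H)$; (iii) the descent lies in $D(\mathcal{A}^H)$; (iv) $\ker\rho=\alpha_H D(\mathcal{A}')$, again via coprimality of $\alpha_H$ and $Q(\mathcal{A}')$.

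Given the exact sequence, the exponent bookkeeping reduces to a Hilbert-series computation. Under hypotheses (1) and (2), subtracting the graded Hilbert series yields
\[
\mathrm{HS}\bigl(D(\mathcal{A})/\alpha_H D(\mathcal{A}'),\,t\bigr)=\frac{\sum_{i=1}^{\ell-1}t^{d_i}}{(1-t)^{\ell-1}},
\]
which is exactly the Hilbert series of a free $S/(\alpha_H)$-module with exponents $(d_1,\dots,d_{\ell-1})$. Once I establish that $\rho$ surjects onto $D(\mathcal{A}^H)$, lifting a basis of the quotient produces a free basis of $D(\mathcal{A}^H)$ of the correct degrees, proving (3). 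The remaining two implications are handled symmetrically: given a free basis of two of the three modules, I would lift and combine generators along the exact sequence, then apply Saito's criterion (stated above) to verify that the resulting set of $\ell$ (or $\ell-1$) derivations has Saito determinant equal to a nonzero scalar multiple of $Q(\mathcal{A})$, $Q(\mathcal{A}')$, or $Q(\mathcal{A}^H)$ as appropriate.

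The principal obstacle is establishing surjectivity of $\rho$ onto $D(\mathcal{A}^H)$: in general the image is only a proper submodule, and one must exploit the freeness hypotheses together with the Hilbert-series match to force equality. My approach would be to identify the cokernel as a graded module whose support lies on $H$ only, and to annihilate it either by a depth argument or by directly lifting each candidate basis element of $D(\mathcal{A}^H)$ to $D(\mathcal{A})$ via polynomial division by $\alpha_H$ controlled by the exponent constraints. Distinguishing the image of $\rho$ from all of $D(\mathcal{A}^H)$ and showing they coincide precisely under the freeness hypotheses is the technical heart of the theorem, and absorbs essentially all of its nontrivial content.
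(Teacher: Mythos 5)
This theorem is quoted in the paper as a known result of Terao, with no proof supplied, so there is no in-paper argument to compare yours against; judged against the standard proof in the literature, your outline is following exactly the canonical route. The setup is correct as far as it goes: the verifications (i)--(iv) establishing exactness of
\[
0 \longrightarrow D(\mathcal{A}') \xrightarrow{\;\cdot \alpha_H\;} D(\mathcal{A}) \xrightarrow{\;\rho\;} D(\mathcal{A}^H)
\]
are routine and correct, and your Hilbert-series computation for $D(\mathcal{A})/\alpha_H D(\mathcal{A}')$ under hypotheses (1) and (2) is right.

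There is, however, a genuine gap, and you have named it yourself: the surjectivity of $\rho$ onto $D(\mathcal{A}^H)$ (equivalently, that the embedding $D(\mathcal{A})/\alpha_H D(\mathcal{A}')\hookrightarrow D(\mathcal{A}^H)$ is an equality) is declared to be ``the technical heart'' and then addressed only with speculative alternatives (``a depth argument'', ``directly lifting each candidate basis element''), neither of which is carried out. This step cannot be waved through on Hilbert-series grounds alone: a graded module over a polynomial ring with the Hilbert series of a free module need not be free (over $S=\mathbb{K}[x,y]$ the module $S/(x)\oplus S(-1)$ has Hilbert series $1/(1-t)^2$ but is not free), and a submodule of $D(\mathcal{A}^H)$ with the ``right'' Hilbert series need not be all of $D(\mathcal{A}^H)$ unless one separately controls $D(\mathcal{A}^H)$ itself. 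The standard proofs close this with a dedicated lemma --- freeness of two of the three modules forces surjectivity of $\rho$, via a localization/rank argument combined with Saito's criterion --- and without some such argument none of the three implications is actually established. Relatedly, ``the remaining two implications are handled symmetrically'' is too optimistic: the directions are not symmetric. For instance, $(1)+(3)\Rightarrow(2)$ proceeds by column-reducing a basis of $D(\mathcal{A})$ against a basis of $D(\mathcal{A}^H)$ until one element lands in $\ker\rho=\alpha_H D(\mathcal{A}')$ and can be divided by $\alpha_H$, whereas $(2)+(3)\Rightarrow(1)$ requires manufacturing a genuinely new element of $D(\mathcal{A})$ of degree $d_\ell$; each direction needs its own degree bookkeeping. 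As it stands, the proposal is an accurate road map of Terao's proof rather than a proof.
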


Note that the subarrangement of a free arrangement can be non-free \cite{edelman1993counterexample}. For more general discussion on freeness, see also \cite{dimca2017hyperplane}. 

In general, supersolvability implies freeness but the inverse is false. We say that a central arrangement $\mathcal{A}$ is inductively free if it can be constructed by using only the addition theorem from the empty arrangement. Every supersolvable arrangement is inductively free \cite{jambu1984free-aim} and inductively free arrangement is also free. Conversely, we can find free but not inductively free arrangements and also inductively free but not supersolvable arrangements\cite{orlik1992arrangements}.

In this thesis, we will focus on graphic arrangement, the arrangement associated to a graph. 

\begin{definition}
    Let $G=([\ell], E)$ be a simple graph on $\ell$ vertices, the graphic arrangement $\mathcal{A}_G$ is defined by
    $$\mathcal{A}_G\coloneqq\{\ker{(x_i-x_j)}\mid (i,j)\in E\}.$$
    The defining polynomial of this arrangement is $$Q(\mathcal{A}_G)=\prod_{(i,j)\in E}(x_i-x_j).$$
\end{definition}

Graphic arrangement can also be seen as subarrangement of Coxeter arrangement of type $A$. The characteristic polynomial of graphic arrangement is equal to the chromatic polynomial of the graph.

In \cite{nian2024qdeformationchromaticpolynomialsgraphical}, the authors introduce another kind of arrangement on graphs called $q$-deformation, defined as follows.
\begin{definition}
Let $G=([\ell], E)$ be a simple graph on $\ell$ vertices, $C(G)$ denotes the set of cliques of the graph. The $q$-deformation of graphic arrangement $\mathcal{A}_G$ is defined by 
    \begin{align*}
\mathcal{A}_{G}^{q} \coloneqq \bigcup_{\{i_{1}, \dots, i_{r}\}\in C(G)}\ \Set{\ker  (a_{i_{1}}x_{i_{1}} +a_{i_2}x_{i_2} +\dots + a_{i_{r}}x_{i_{r}} )  \mid a_{i_j}\in \mathbb{F}_q^\times\text{ for all }j}, 
\end{align*}
\end{definition}

We call this arrangement ``$q$-deformation'' because a certain limit $q\to 1$ is expected to recover the chromatic polynomial as follows.

\begin{conjecture}[Nian-Tsujie-Uchiumi-Yoshinaga \cite{nian2024qdeformationchromaticpolynomialsgraphical}]\label{q-deformation conjceture}
The characteristic polynomial of the $q$-deformation $\chi(\mathcal{A}_G^q,t)$ is a polynomial in $q$ and $t$, such that
    \[
\lim_{q\to 1}\frac{\chi(\mathcal{A}_G^q, q^s)}{(q-1)^\ell}
=\chi(G, s),
\]
where $\chi(G, t)$ denotes the chromatic polynomial of the graph $G$.
\end{conjecture}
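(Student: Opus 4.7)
The plan is to interpret $\chi(\mathcal{A}_G^q, q^s)$ via the Crapo--Rota/Athanasiadis-style finite-field method and then extract the $q\to 1$ limit by stratifying the resulting point count according to the $\mathbb{F}_q$-linear dependencies among the coordinates. Since $\mathcal{A}_G^q$ is defined by linear equations with coefficients in $\mathbb{F}_q$, Möbius inversion on its intersection lattice (viewed inside $\mathbb{F}_{q^s}^\ell$) yields
\[
\chi(\mathcal{A}_G^q, q^s) = \#\Bigl\{(x_1,\dots,x_\ell)\in \mathbb{F}_{q^s}^\ell : \sum_{i\in K} a_i x_i \neq 0 \text{ for every } K\in C(G) \text{ and every } (a_i)\in(\mathbb{F}_q^\times)^K\Bigr\}.
\]
Polynomiality of this count in both $q$ and $t=q^s$ should follow by rewriting the enumeration in terms of $\mathbb{F}_q^\times$-orbits on each coordinate, giving Möbius coefficients that are polynomials in $q$.

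The key step is identifying the \emph{main term}. For $x_i\neq 0$, write $X_i\coloneqq \mathbb{F}_q^\times\cdot x_i\in \mathbb{F}_{q^s}^\times/\mathbb{F}_q^\times$. The singleton-clique conditions force every $x_i\neq 0$, and the edge conditions force $X_i\neq X_j$ whenever $\{i,j\}\in E$. Thus, ignoring the higher-clique conditions, the map $i\mapsto X_i$ is a proper coloring of $G$ with the $[s]_q\coloneqq (q^s-1)/(q-1)$ available ``colors'', and each coloring lifts to $(q-1)^\ell$ ordered tuples. This gives a main term of size $(q-1)^\ell\,\chi(G,[s]_q)$; since $[s]_q\to s$ and $\chi(G,\cdot)$ is a polynomial, the normalized limit is $\chi(G,s)$, as desired.

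The main obstacle, and the heart of the argument, is controlling the effect of the higher-clique conditions ($|K|\geq 3$). I would stratify the count by the $\mathbb{F}_q$-linear dependence matroid of $(x_1,\dots,x_\ell)$ inside $\mathbb{F}_{q^s}$: on the stratum where $(x_i)_{i\in K}$ is $\mathbb{F}_q$-linearly independent for every clique $K\in C(G)$, the higher-clique conditions are automatic, and one recovers precisely the main term. On each remaining stratum, any nontrivial $\mathbb{F}_q$-relation supported on some clique imposes one extra linear constraint on the scaling parameters, so the stratum should contribute $O((q-1)^{\ell+1})$; sample calculations on $K_n$ (where $\chi(\mathcal{A}_{K_n}^q, q^s)=\prod_{i=0}^{n-1}(q^s-q^i)$) and on graphs containing triangles support this bound. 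Making the estimate precise uniformly across all cliques, via an inclusion-exclusion matched to the Möbius function of the intersection lattice, is the principal technical hurdle.
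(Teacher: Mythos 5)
The statement you are proving is stated in the paper as an open \emph{conjecture}: the paper (and the cited source) proves it only for special classes --- chordal graphs via supersolvability, and triangle-free graphs via the one-dimensional simplicial complex computation of Section 4 --- so there is no full proof in the paper to compare against, and your proposal must stand on its own. It does not: by your own admission the "principal technical hurdle" is left open, and that hurdle is exactly where the conjecture lives. Your identification of the main term is correct and consistent with the known cases (for $K_3$ one checks $\chi(\mathcal{A}_{K_3}^q,q^s)=(q-1)^3[s]_q([s]_q-1)([s]_q-1-q)$, which differs from $(q-1)^3\chi(K_3,[s]_q)$ by a term divisible by $(q-1)^4$, matching your heuristic), but the claim that every non-generic stratum contributes $O((q-1)^{\ell+1})$ is asserted, not proved; a single $\mathbb{F}_q$-linear relation supported on a clique constrains the scaling parameters, but you must show this survives the signed inclusion--exclusion over all cliques and all relations simultaneously, uniformly in $q$, and no mechanism for that is given.

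There is a second, logically prior gap. The conjecture asserts that $\chi(\mathcal{A}_G^q,t)$ is a polynomial in $q$ and $t$ jointly, i.e.\ that one two-variable polynomial computes the characteristic polynomial for \emph{every} prime power $q$; without this the limit $q\to 1$ is not even well posed, since $q$ ranges over prime powers $\geq 2$. Your one-sentence justification ("should follow by rewriting the enumeration in terms of $\mathbb{F}_q^\times$-orbits") does not address the real obstruction: the Whitney numbers of the intersection lattice of $\mathcal{A}_G^q$ can a priori depend on $q$ in a non-polynomial way, and your proposed stratification by the $\mathbb{F}_q$-linear dependence matroid of $(x_1,\dots,x_\ell)$ inside $\mathbb{F}_{q^s}$ makes this worse rather than better --- the number of $\mathbb{F}_q$-representations of a fixed matroid is notoriously \emph{not} a polynomial in $q$ in general, so the individual strata counts need not be polynomial and would have to recombine miraculously. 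Until both the polynomiality and the uniform error estimate are established, this is a plausible plan of attack on an open problem, not a proof.
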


So $q$-deformation is a $q$-analog of graphic arrangement. It is checked that the $q$-deformations of graphic arrangements of chordal graphs and triangle-free graphs satisfy the conjecture.

Chordal graph is an important class of graphs related to supersolvability and freeness. Rose\cite{rose1970triangulated} proved that a graph is chordal if and only if there is a \textbf{perfect elimination ordering} of the vertices $(v_1,v_2,\cdots,v_\ell)$ such that for any $i$, any two distinct vertices adjacent to $v_i$ in $\{v_1,v_2,\cdots,v_i\}$ are also adjacent.

The following theorem summarizes the relationship among the chordality of a graph, the freeness and supersolvability of the corresponding graphic arrangement and $q$-deformation.
\begin{theorem}[Stanley (See {\cite[Theorem 3.3]{edelman1994free-mz}}), Dirac \cite{dirac1961rigid-aadmsduh}, Fulkerson-Gross \cite{fulkerson1965incidence-pjom}, Nian-Tsujie-Uchiumi-Yoshinaga \cite{nian2024qdeformationchromaticpolynomialsgraphical}]\label{graphical arrangement freeness}
The following are equivalent. 
\begin{enumerate}[label=(\arabic*)]
\item\label{PEO} $G$ has a perfect elimination ordering. 
\item $\mathcal{A}_{G}$ is supersolvable. 
\item $\mathcal{A}_{G}$ is free. 
\item\label{chordal} $G$ is chordal. 
\item\label{q SS} $\mathcal{A}_{G}^{q}$ is supersolvable. 
\item\label{q free} $\mathcal{A}_{G}^{q}$ is free. 
\end{enumerate}
\end{theorem}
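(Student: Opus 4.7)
The plan is to prove the six conditions pairwise equivalent by arranging them into two ``towers'' both anchored at the combinatorial condition \ref{PEO}: the graphic-arrangement tower \ref{PEO} $\Rightarrow$ (2) $\Rightarrow$ (3) $\Rightarrow$ \ref{PEO}, the $q$-deformation tower \ref{PEO} $\Rightarrow$ \ref{q SS} $\Rightarrow$ \ref{q free} $\Rightarrow$ \ref{PEO}, together with the purely combinatorial equivalence \ref{PEO} $\Leftrightarrow$ \ref{chordal}. For \ref{PEO} $\Leftrightarrow$ \ref{chordal} I would invoke Dirac's theorem that every chordal graph has a simplicial vertex and inductively peel off such vertices, observing that removing a simplicial vertex preserves chordality; conversely, the defining property of a PEO directly precludes induced cycles of length at least four.

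For the graphic tower, \ref{PEO} $\Rightarrow$ (2) is shown by defining the filtration $\mathcal{A}_{i} \coloneqq \{\ker(x_{j}-x_{k}) \mid (v_{j},v_{k}) \in E,\ j,k \le i\}$ from a PEO $(v_{1},\dots,v_{\ell})$. Any two new hyperplanes in $\mathcal{A}_{i}\setminus\mathcal{A}_{i-1}$ have the form $\ker(x_{i}-x_{j})$ and $\ker(x_{i}-x_{k})$ for $v_{j},v_{k}$ neighbors of $v_{i}$ in $\{v_{1},\dots,v_{i-1}\}$; the PEO property forces $(v_{j},v_{k})\in E$, so $\ker(x_{j}-x_{k})\in\mathcal{A}_{i-1}$ already contains their intersection, giving supersolvability. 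The implication (2) $\Rightarrow$ (3) is the general theorem of Jambu already recorded in the preliminaries. The remaining direction (3) $\Rightarrow$ \ref{PEO} is Stanley's theorem, the deepest step on this side: I would argue by induction on $\ell$, using Terao's factorization of the characteristic polynomial of a free arrangement together with the identification of $\chi(\mathcal{A}_{G},t)$ with the chromatic polynomial $\chi(G,t)$ to extract a simplicial vertex.

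For the $q$-deformation tower, \ref{q SS} $\Rightarrow$ \ref{q free} is again supersolvable-implies-free. For \ref{PEO} $\Rightarrow$ \ref{q SS} I would use a parallel filtration in which $\mathcal{A}_{i}$ is the $q$-deformation of the subgraph induced on $\{v_{1},\dots,v_{i}\}$; modularity reduces, via the clique structure of the neighborhood of $v_{i}$ guaranteed by the PEO, to a pairwise-intersection computation for hyperplanes of the form $\ker\bigl(a_{i}x_{i}+\sum_{v_{j}\in N(v_{i})\cap\{v_{1},\dots,v_{i-1}\}}a_{j}x_{j}\bigr)$, which by the clique property can be combined to produce a containing hyperplane already present in $\mathcal{A}_{i-1}$. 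The main obstacle is \ref{q free} $\Rightarrow$ \ref{PEO}, the direction due to Nian--Tsujie--Uchiumi--Yoshinaga: Terao's factorization forces $\chi(\mathcal{A}_{G}^{q},t)$ to split into integer linear factors, and the heart of the argument is to exhibit a chordless cycle of length at least four whose $q$-deformation has a characteristic polynomial that refuses to so split. This calculation, rather than the setup of the filtration, is where I expect the real difficulty to lie, and it is the step I would spend the most effort on.
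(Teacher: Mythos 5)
The paper does not prove this theorem; it is stated as background and attributed to Stanley (via Edelman--Reiner), Dirac, Fulkerson--Gross, and Nian--Tsujie--Uchiumi--Yoshinaga, so there is no in-paper argument to compare against. Your decomposition into two towers anchored at \ref{PEO}, the equivalence \ref{PEO} $\Leftrightarrow$ \ref{chordal}, the PEO-indexed filtrations for \ref{PEO} $\Rightarrow$ (2) and \ref{PEO} $\Rightarrow$ \ref{q SS}, and the use of ``supersolvable implies free'' are all the standard and correct route.

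However, both ``hard'' directions as you sketch them have the same genuine gap. For (3) $\Rightarrow$ \ref{PEO} you propose to apply Terao's factorization to $\chi(\mathcal{A}_G,t)=\chi(G,t)$ and ``extract a simplicial vertex'' from the integer roots. This cannot work: integrality of the roots of the chromatic polynomial does \emph{not} imply chordality (there are well-known non-chordal graphs whose chromatic polynomials split into integer linear factors), so no amount of induction on $\ell$ recovers a simplicial vertex from the factorization of the \emph{global} characteristic polynomial. The missing ingredient is localization: if $G$ is not chordal it contains an induced cycle $C_k$, $k\ge 4$, and because the cycle is chordless the subarrangement $\mathcal{A}_{C_k}$ is a localization $(\mathcal{A}_G)_X$ of $\mathcal{A}_G$ at the flat $X$ where the cycle's coordinates coincide. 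Terao's theorem that localizations of free arrangements are free then transfers freeness to $\mathcal{A}_{C_k}$, and only \emph{then} does the non-splitting of $\chi(C_k,t)=(t-1)^k+(-1)^k(t-1)$ give a contradiction. The identical issue appears in your sketch of \ref{q free} $\Rightarrow$ \ref{PEO}: exhibiting an induced cycle whose $q$-deformation has a non-splitting characteristic polynomial contradicts nothing about $\chi(\mathcal{A}_G^q,t)$ unless you first show that the arrangement attached to that cycle is a localization of $\mathcal{A}_G^q$ (here one localizes at the flat $x_j=0$ for $j$ on the cycle, using chordlessness to see that only the cycle's cliques contribute) and invoke the hereditary freeness of localizations. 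Without this step the argument does not close, and it is precisely the step your proposal omits.
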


The filtration $\varnothing=\mathcal{A}_{0} \subseteq \mathcal{A}_{1} \subseteq \mathcal{A}_{2} \subseteq \dots \subseteq \mathcal{A}_{\ell} = \mathcal{A}_G$ given by 
$$\mathcal{A}_{i}=\mathcal{A}_{G_i},$$
allows us to write the characteristic polynomial of graphic arrangement of any chordal graph into linear factors where $G_i$ is the subgraph induced by $\{v_1,v_2,\cdots,v_i\}$, or equivalently,
\begin{align*}
\chi(\mathcal{A}_{G}, t) = \prod_{i=1}^{\ell}(t-|N_{G_{i}}(v_{i})|), 
\end{align*}
where $N_{G_{i}}(v_{i})$ is the neighborhood of $v_{i}$ in $G_{i}$. 

For $q$-deformations of graphic arrangements of chordal graphs, we have
\begin{align*}
\chi(\mathcal{A}_{G}^{q}, t) = \prod_{i=1}^{\ell}\left(t-q^{|N_{G_{i}}(v_{i})|}\right).
\end{align*}

\section{Generalization to simplicial complexes}\label{section2}
A \textbf{simplicial complex} $\Delta=(V,\mathcal{F})$ consists of a vertex set $V$ and a simplicial face set $\mathcal{F}\subseteq 2^V$ such that for any face $F\in\mathcal{F}$, any subset of $F$ is also a face of this complex.

Following the manner of $q$-deformation, we can extend the definition by replacing cliques by faces in simplicial complexes.

\begin{definition}
    Let $\Delta=([\ell], \mathcal{F})$ be a simplicial complex on $\ell$ vertices, the generalized $q$-deformation associated to $\Delta$ is defined by
    $$\mathcal{S}_\Delta^q \coloneqq \bigcup_{\{i_{1}, \dots, i_{r}\}\in \mathcal{F}}\ \{ \ker (a_{i_{1}}x_{i_{1}} + \dots + a_{i_{r}}x_{i_{r}}) \mid a_{i_j}\in \mathbb{F}_q^\times\text{ for all }j\}.$$
\end{definition}

In this sense, the set of cliques of a graph form a simplicial complex which is called the clique complex, denoted by $\mathcal{C}(G)$. The $q$-deformation $\mathcal{A}_G^q$ of a graph is equal to $\mathcal{S}_{\mathcal{C}(G)}^q$. 
It is also easy to see any simplicial complex is the intermediate complex between the $1$-skeleton of itself, say underlying graph and the clique complex of its underlying graph. So we extend the conjecture in \cite{nian2024qdeformationchromaticpolynomialsgraphical} to a more general case.
\begin{conjecture}\label{main conjecture}
    For any simplicial complex $\Delta=([\ell], \mathcal{F})$, the characteristic polynomial $\chi(\mathcal{S}_\Delta^q, t)$ is a polynomial in $q$ and $t$, such that
    \[
\lim_{q\to 1}\frac{\chi(\mathcal{S}_\Delta^q, q^s)}{(q-1)^\ell}
=\chi(G, s),
\]
where $\chi(G, t)$ denotes the chromatic polynomial of the underlying graph $G$.
\end{conjecture}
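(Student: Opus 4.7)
The plan is to compute $\chi(\mathcal{S}_\Delta^q, q^s)$ via the finite-field method, identify the resulting count combinatorially, and extract the $q\to 1$ asymptotics. Since $\mathcal{S}_\Delta^q$ is central and defined over $\mathbb{F}_q$, the Crapo--Rota / Athanasiadis formula should give, for every $s\geq 1$,
\[
\chi(\mathcal{S}_\Delta^q, q^s)
=\#\bigl\{\mathbf{x}\in\mathbb{F}_{q^s}^\ell : \mathbf{x}\notin H \text{ for all } H\in\mathcal{S}_\Delta^q\bigr\}
=:N_\Delta(q,s).
\]
Because a polynomial of bounded degree in $t$ is determined by its values at $t=q,q^2,\dots$, this pins down $\chi(\mathcal{S}_\Delta^q,t)$, and polynomiality in $q$ and $t$ then reduces to exhibiting $N_\Delta(q,s)$ as a polynomial expression in $q$ and $q^s$.

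Next, I would establish a combinatorial characterization of admissibility. Viewing $\mathbb{F}_{q^s}$ as an $s$-dimensional $\mathbb{F}_q$-vector space, I would prove by induction on $|F|$ that $\mathbf{x}$ is admissible if and only if for every face $F\in\mathcal{F}$ the vectors $\{x_i\}_{i\in F}$ are $\mathbb{F}_q$-linearly independent. One direction is immediate; for the other, any nontrivial $\mathbb{F}_q$-dependency on $\{x_i\}_{i\in F}$ has a minimal-support sub-face $F'\subseteq F$, which itself lies in $\mathcal{F}$ by downward closure, and on $F'$ the dependency has all nonzero coefficients, contradicting the defining condition for the hyperplanes associated with $F'$. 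Stratifying by the $\mathbb{F}_q$-matroid type of $(x_1,\dots,x_\ell)$ and evaluating each stratum with Gaussian-binomial identities would then yield the desired polynomiality of $N_\Delta(q,s)$ in $q$ and $q^s$.

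For the limit I would first treat the $1$-skeleton $G$ of $\Delta$. There admissibility reduces to $x_i\neq 0$ for all $i$ together with $x_i\notin\mathbb{F}_q^\times x_j$ for every edge $\{i,j\}$; equivalently, the assignment $i\mapsto \mathbb{F}_q x_i$ is a proper coloring of $G$ using the $(q^s-1)/(q-1)$ $\mathbb{F}_q$-lines through the origin of $\mathbb{F}_{q^s}$, followed by a choice of one of the $q-1$ nonzero representatives on each line. Hence
\[
N_G(q,s)
=(q-1)^{\ell}\,\chi\!\left(G,\tfrac{q^s-1}{q-1}\right),
\]
and since $(q^s-1)/(q-1)\to s$ as $q\to 1$, we obtain $N_G(q,s)/(q-1)^\ell\to\chi(G,s)$ at once.

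The last and hardest step is to absorb the higher-dimensional faces, namely to show
\[
N_G(q,s)-N_\Delta(q,s)
=o\!\left((q-1)^\ell\right)
\quad\text{as } q\to 1.
\]
A tuple satisfying every edge condition but failing $\mathbb{F}_q$-linear independence on some face $F$ of size $r\geq 3$ forces the lines $\{\mathbb{F}_q x_i\}_{i\in F}$ to lie in a common subspace of dimension at most $r-1$, a codimension-one constraint that should cost an extra factor of $(q-1)$. A stratification of tuples by the rank of the span on each face, together with a M\"obius-style inclusion--exclusion over which faces become dependent, is the tool I would use. This step is the principal obstacle: making the bound uniform across all simplicial complexes is subtle, since for chordal clique complexes or shellable complexes one may expect a linear-factor factorization extending Theorem~\ref{graphical arrangement freeness}, but for an arbitrary $\Delta$ the cancellations are delicate and will likely require a new combinatorial identity on the face poset of $\Delta$.
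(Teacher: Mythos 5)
The statement you are addressing is stated in the paper as a \emph{conjecture}: the paper offers no proof of it, only partial evidence (a congruence modulo $q-1$ via vector colorings, the $1$-dimensional case in Theorem \ref{3}, and explicit computations such as $\Delta_{5,3}$). So the question is whether your argument actually settles Conjecture \ref{main conjecture}, and it does not. Your framework is sound as far as it goes: the identity $\chi(\mathcal{S}_\Delta^q,q^s)=N_\Delta(q,s)$ is legitimate because every hyperplane is defined over $\mathbb{F}_q$, so the intersection lattice is unchanged under extension to $\mathbb{F}_{q^s}$; the reformulation of admissibility as $\mathbb{F}_q$-linear independence of $\{x_i\}_{i\in F}$ for every face $F$ is correct, and the minimal-support argument uses downward closure exactly where it is needed; and the $1$-skeleton count $N_G(q,s)=(q-1)^\ell\chi\bigl(G,\tfrac{q^s-1}{q-1}\bigr)$ reproduces the paper's formula $\chi(\mathcal{S}_G^q,t)=(q-1)^\ell\chi\bigl(G,\tfrac{t-1}{q-1}\bigr)$ and yields the limit exactly as in the paper's proof of Theorem \ref{3}.

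The two load-bearing steps are missing, however. First, polynomiality in $q$: stratifying by the $\mathbb{F}_q$-matroid type of $(x_1,\dots,x_\ell)$ does not obviously work, because the number of representations of a fixed matroid over $\mathbb{F}_q$ inside $\mathbb{F}_q^s$ is in general not a polynomial in $q$ (representation spaces can be arbitrarily complicated), so no "Gaussian-binomial identity" evaluates an individual stratum; you would need the sum over all admissible matroid types to be polynomial despite the strata not being so, and you supply no mechanism for that cancellation. Second, the estimate $N_G(q,s)-N_\Delta(q,s)=o\bigl((q-1)^\ell\bigr)$ is asserted, not proved — and it cannot even be formulated as a limit until polynomiality in $q$ is established, since $q$ ranges only over prime powers $\geq 2$. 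The heuristic that a dependency on a face of size $r\geq 3$ "costs an extra factor of $q-1$" is plausible, but making it uniform over all dependency patterns is precisely the content of the conjecture beyond dimension one; you acknowledge this yourself. What you have is a correct reduction of the conjecture to these two open statements, together with a proof of the $1$-dimensional case that coincides with the paper's; it is not a proof of the conjecture.
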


Following the same method of vector coloring in \cite{nian2024qdeformationchromaticpolynomialsgraphical}, we can prove the congruence between both sides of the previous equation when $t$ is a nonnegative integer $k$. 

\begin{proposition}
Let $q$ be a prime power and $k$ be a nonnegative integer, then
    \[
\frac{\chi(\mathcal{S}_\Delta^q, q^k)}{(q-1)^\ell}
\equiv\chi(G, k) \pmod{q-1}.
\]
\end{proposition}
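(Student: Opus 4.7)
The plan is to follow the vector-coloring argument of the reference paper, adapted to simplicial complexes.

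By the finite field method, $\chi(\mathcal{S}_\Delta^q, q^k)$ equals the number of points of $\mathbb{F}_{q^k}^\ell$ avoiding every hyperplane of $\mathcal{S}_\Delta^q$. Fixing an $\mathbb{F}_q$-linear isomorphism $\mathbb{F}_{q^k} \cong \mathbb{F}_q^k$, this translates into counting tuples $(v_1, \ldots, v_\ell) \in (\mathbb{F}_q^k)^\ell$ such that $\sum_{m \in F} a_m v_m \neq 0$ for every face $F \in \mathcal{F}$ and every $(a_m) \in (\mathbb{F}_q^\times)^{|F|}$. Since $\mathcal{F}$ is closed under subsets, this is equivalent to the single condition that $(v_m)_{m \in F}$ be $\mathbb{F}_q$-linearly independent for every $F \in \mathcal{F}$. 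In particular every $v_i$ is nonzero (using that singletons are faces), so coordinate-wise scaling gives a free action of $(\mathbb{F}_q^\times)^\ell$ on the set of valid tuples; writing $\chi(\mathcal{S}_\Delta^q, q^k) = (q-1)^\ell N$, the orbit count $N$ is exactly the number of $\ell$-tuples of lines $(L_1, \ldots, L_\ell) \subseteq \mathbb{F}_q^k$ whose restrictions to every face are linearly independent.

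I next stratify $N$ by the leading-coordinate type: every line $L \subseteq \mathbb{F}_q^k$ admits a unique representative $u_L = e_{c(L)} + \tilde u_L$ with $c(L) \in [k]$ the position of the first nonzero entry and $\tilde u_L \in \mathbb{F}_q^{k - c(L)}$. Thus $N = \sum_{(c_i) \in [k]^\ell} N_{(c_i)}$. When $(c_i)$ is a proper $k$-coloring of the underlying graph $G$, the leading coordinates inside any face $F$ are pairwise distinct (because every pair inside a face is a graph edge), so $(u_m)_{m \in F}$ is in echelon form and automatically independent. The $\tilde u_i$'s are then unconstrained, giving $N_{(c_i)} = \prod_i q^{k - c_i} \equiv 1 \pmod{q - 1}$, and summed over all $\chi(G, k)$ proper types this accounts for the desired residue.

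The principal obstacle is to show $N_{(c_i)} \equiv 0 \pmod{q - 1}$ for every improper type. Fix a monochromatic edge $\{i^*, j^*\}$ with $c_{i^*} = c_{j^*} = c^*$; the edge condition forces $\tilde u_{i^*} \neq \tilde u_{j^*}$ in $\mathbb{F}_q^{k - c^*}$. The plan is to expand $N_{(c_i)}$ by inclusion-exclusion over the affine loci inside $\prod_i \mathbb{F}_q^{k - c_i}$ cut out by the various face-linear-dependence conditions: each intersection of such loci is an affine subspace with $q^d$ points for some $d \geq 0$, so $N_{(c_i)}$ is a polynomial in $q$ and its residue modulo $q - 1$ equals its value at $q = 1$. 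Pairing subsets of loci that contain the mono-edge locus $L_0 = \{\tilde u_{i^*} = \tilde u_{j^*}\}$ with those that do not (and using that $L_0$ is nonempty and is contained in every face locus $L_F$ with $\{i^*, j^*\} \subseteq F$) collapses the alternating sum and shows it vanishes in the improper case. The small cases confirm the pattern: for a single mono edge $N_{(c,c)} = q^{k-c}(q^{k-c} - 1)$, and for a mono triangle $N_{(c,c,c)} = q^{k-c}(q^{k-c} - 1)(q^{k-c} - q)$, both carrying an explicit factor of $q^{k-c} - 1$ that is divisible by $q - 1$. Combining the proper and improper contributions then yields $N \equiv \chi(G, k) \pmod{q - 1}$, as required.
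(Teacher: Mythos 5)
Your overall scaffolding is the right one, and it is the same vector-coloring route the paper points to (the paper itself gives no written proof beyond deferring to that method): the finite-field count, the reformulation of the hyperplane conditions as linear independence on every face via closure under subsets, the free $(\mathbb{F}_q^\times)^\ell$-action extracting the factor $(q-1)^\ell$, and the stratification of line-tuples by pivot type with proper types contributing $\prod_i q^{k-c_i}\equiv 1 \pmod{q-1}$ are all correct. The gap is exactly where you locate it, but your proposed mechanism for closing it does not work. The inclusion-exclusion you describe rests on the assertion that each face-dependence locus, and each intersection of such loci, is an affine subspace of $\prod_i\mathbb{F}_q^{k-c_i}$. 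That is true only for faces of size $2$ with equal pivots (where dependence reads $\tilde u_{i}=\tilde u_{j}$). For a face with three or more vertices the condition ``$(u_m)_{m\in F}$ is linearly dependent'' is a rank (determinantal) condition; for example, for a $2$-face with all three pivots equal it is the locus of affinely dependent triples in $\mathbb{F}_q^{k-c}$, whose cardinality $q^{3(k-c)}-q^{k-c}(q^{k-c}-1)(q^{k-c}-q)$ is not a power of $q$, and which is not an affine subspace. Consequently the alternating sum you set up is not a sum of powers of $q$, and the ``pairing with $L_0$'' step, as written, only re-derives the definition of $N_{(c_i)}$ rather than exhibiting a factor of $q-1$. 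Checking the two smallest configurations (one mono edge, one mono triangle) does not exercise the real difficulty, which is the interaction of a monochromatic edge with higher-dimensional faces that contain one of its endpoints but not the other.

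The claim you need, $N_{(c_i)}\equiv 0\pmod{q-1}$ for every improper pivot type, is nevertheless true, and it can be proved by a group action that repairs your step uniformly for all pivot types at once. Let the diagonal torus $T=(\mathbb{F}_q^\times)^k$ act on $\mathbb{F}_q^k$; it maps lines to lines, preserves the pivot of a line, and preserves linear independence, so it acts on the set of valid line-tuples of each fixed pivot type. The stabilizer of a tuple $(L_1,\dots,L_\ell)$ is the subtorus $\{t\in T \mid t_m=t_{c_i}\text{ for all }i\text{ and all }m\in\operatorname{supp}(u_{L_i})\}$, so every orbit has size a power of $q-1$, and the size is $1$ exactly when every $u_{L_i}=e_{c_i}$. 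Hence $N_{(c_i)}$ is congruent modulo $q-1$ to the number of $T$-fixed valid tuples, which is $1$ if $(c_i)$ is injective on every face (equivalently, a proper coloring of $G$, since every pair inside a face is an edge) and $0$ otherwise. Substituting this for your inclusion-exclusion step completes the proof along the lines you intended.
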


There is no efficient way to compute the characteristic polynomial of this kind of arrangements other than using deletion-restriction formula repeatedly. However, we can relate the characteristic polynomial to the underlying graph for some special cases.

\begin{theorem}[$q$-deletion-contraction]
    \label{q-deletion-contraction}
    Let $\Delta=([\ell], \mathcal{F})$ be a simplicial complex. Suppose $e=\{i_1, i_2\}$ is an edge of its underlying graph. If $e$ is a maximal face in $\Delta$, then by letting $\Delta/e$ be the simplicial complex identifying $i_1$ and $i_2$ in each face of $\Delta$, we have \[
    \chi(\mathcal{S}_{\Delta}^{q}, t)=\chi(\mathcal{S}_{\Delta\setminus e}^{q}, t)-(q-1)\chi(\mathcal{S}_{\Delta/e}^{q}, t). 
    \]
\end{theorem}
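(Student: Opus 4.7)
The plan is to apply the deletion-restriction formula to the hyperplanes coming from the maximal face $e$ one at a time, and to verify that each resulting restriction is isomorphic to $\mathcal{S}_{\Delta/e}^q$.

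First I would identify the hyperplanes in $\mathcal{S}_\Delta^q$ arising from the face $e=\{i_1,i_2\}$. Rewriting $\ker(ax_{i_1}+bx_{i_2})$ as $H_c:=\ker(x_{i_1}-cx_{i_2})$ with $c=-b/a\in\mathbb{F}_q^\times$ yields exactly $q-1$ distinct hyperplanes. Since $e$ is a maximal face, no other face of $\Delta$ contains both $i_1$ and $i_2$, so no hyperplane coming from a face $F\neq e$ can equal any $H_c$; hence $\mathcal{S}_\Delta^q=\mathcal{S}_{\Delta\setminus e}^q\sqcup\{H_c:c\in\mathbb{F}_q^\times\}$. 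Fixing an enumeration $c_1,\dots,c_{q-1}$ of $\mathbb{F}_q^\times$ and setting $\mathcal{B}_0=\mathcal{S}_{\Delta\setminus e}^q$, $\mathcal{B}_k=\mathcal{B}_{k-1}\cup\{H_{c_k}\}$, so that $\mathcal{B}_{q-1}=\mathcal{S}_\Delta^q$, I would apply deletion-restriction at each step and telescope to obtain
\[
\chi(\mathcal{S}_\Delta^q,t)=\chi(\mathcal{S}_{\Delta\setminus e}^q,t)-\sum_{k=1}^{q-1}\chi(\mathcal{B}_{k-1}^{H_{c_k}},t).
\]

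The heart of the argument is to prove $\mathcal{B}_{k-1}^{H_{c_k}}=\mathcal{S}_{\Delta/e}^q$ as arrangements for every $k$, identifying $H_{c_k}$ with the ambient space of $\mathcal{S}_{\Delta/e}^q$ via the coordinates $\{x_j:j\neq i_1\}$ (the merged vertex of $\Delta/e$ being labeled $i_2$). For a hyperplane coming from a face $F\in\Delta\setminus e$, two cases arise (the case $\{i_1,i_2\}\subseteq F$ being ruled out by the maximality of $e$ together with $F\neq e$). If $i_1\notin F$, the restriction to $H_{c_k}$ is unchanged and corresponds to the face $F\in\Delta/e$. If $i_1\in F$ and $i_2\notin F$, substituting $x_{i_1}=c_k x_{i_2}$ turns $\ker(a_{i_1}x_{i_1}+\sum_{j\in F\setminus\{i_1\}}a_j x_j)$ into $\ker(c_k a_{i_1}x_{i_2}+\sum_{j\in F\setminus\{i_1\}}a_j x_j)$, and as $a_{i_1}$ ranges over $\mathbb{F}_q^\times$ so does $c_k a_{i_1}$, recovering the full family of hyperplanes associated to $(F\setminus\{i_1\})\cup\{i_2\}\in\Delta/e$. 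The restrictions of the previously added $H_{c_j}$ with $j<k$ collapse to $\ker((c_k-c_j)x_{i_2})=\ker(x_{i_2})$, which is already present in $\mathcal{S}_{\Delta/e}^q$ as the hyperplane from the vertex $\{i_2\}$. A brief verification that every face of $\Delta/e$ is the image of some face of $\Delta\setminus e$ completes the identification, and substituting $\chi(\mathcal{B}_{k-1}^{H_{c_k}},t)=\chi(\mathcal{S}_{\Delta/e}^q,t)$ into the telescoping identity produces the claimed formula.

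The main obstacle I anticipate is the bookkeeping in this identification: two distinct faces $F_1\in\Delta\setminus e$ with $i_2\in F_1$, $i_1\notin F_1$ and $F_2=(F_1\setminus\{i_2\})\cup\{i_1\}$ (when also in $\Delta$) map to the same face of $\Delta/e$, and one must verify that their two families of restricted hyperplanes coincide as sets rather than being counted twice. This is precisely where the rescaling $a_{i_1}\mapsto c_k a_{i_1}$ does its work, and it is also where the hypothesis that $e$ be a maximal face of $\Delta$ is essential, since otherwise a face containing both $i_1$ and $i_2$ would contribute restrictions not captured by $\mathcal{S}_{\Delta/e}^q$ alone.
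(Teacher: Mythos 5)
Your proposal is correct and follows exactly the route the paper indicates (its proof is the one-line instruction to apply deletion--restriction repeatedly to the hyperplanes $x_{i_1}+ax_{i_2}=0$); you have simply supplied the details, in particular the telescoping identity and the verification that each restriction $\mathcal{B}_{k-1}^{H_{c_k}}$ is identified with $\mathcal{S}_{\Delta/e}^{q}$, with the maximality of $e$ used precisely where it must be. The rescaling argument $a_{i_1}\mapsto c_k a_{i_1}$ and the collapse of the earlier $H_{c_j}$ onto $\ker(x_{i_2})$ are both handled correctly.
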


This theorem can be seen as a $q$-analog of the deletion-contraction formula of the chromatic polynomial of a graph.

\begin{proof}
    Use deletion-restriction formula repeatedly on hyperplanes of the form $x_{i_1}+ax_{i_2}=0$.
\end{proof}

It is hard to check if an arrangement derived from general simplicial complex is free efficiently. For example, we have the following computation on subcomplexes of a simplex. 

\begin{example}
    Let $\Delta_{\ell,k}$ be the $(k-1)$-skeleton of a $(\ell-1)$-simplex. Then we have the following results.
    \begin{enumerate}
        \item $\mathcal{S}_{\Delta_{\ell,\ell-1}}^q$ is free. The characteristic polynomial is given by using deletion-restriction repeatedly:
        $$\chi(\mathcal{S}_{\Delta_{\ell,\ell-1}}^q,t)=(t-1)(t-q)\cdots(t-q^{\ell-2})(t-q^{\ell-1}+(q-1)^{\ell-1}),$$
        then $\chi(\mathcal{S}_{\Delta_{\ell,\ell-1}}^q,q^{\ell-2})=0$ hence it is free by using the theorem of Yoshinaga \cite[Theorem 11]{yoshinaga2007free-potjasams}.
        \item $\mathcal{S}_{\Delta_{5,3}}^q$ is not free. To be specific,
        \begin{align*}
            \chi(\mathcal{S}_{\Delta_{5,3}}^q,t)=& (t-1)(t-q)(t-q^2)\\
            &(t^2-(9q^2-11q+4)t+21q^4-54q^3+57q^2-29q+6),
        \end{align*}
        when $q=2$, we have
        \begin{align*}
            \chi(\mathcal{S}_{\Delta_{5,3}}^2,t) &= (t-1)(t-2)(t-4)(t^2-18t+80)\\
            &=(t-1)(t-2)(t-4)(t-8)(t-10);
        \end{align*}
        when $q=3$, we have
        \begin{align*}
            \chi(\mathcal{S}_{\Delta_{5,3}}^3,t) &= (t-1)(t-3)(t-9)(t^2-52t+675)\\
            &=(t-1)(t-3)(t-9)(t-25)(t-27);
        \end{align*}
        when $q=4$, we have
        \begin{align*}
            \chi(\mathcal{S}_{\Delta_{5,3}}^4,t) &= (t-1)(t-4)(t-16)(t^2-104t+2722),
        \end{align*}
        and it cannot be factorized into linear factors since $2722=2\times 1361$;

         when $q=5$, we have
         \begin{align*}
            \chi(\mathcal{S}_{\Delta_{5,3}}^5,t) &= (t-1)(t-5)(t-25)(t^2-174t+7661)
        \end{align*}
        and it cannot be factorized into linear factors since $7661=47\times 163$.
    \end{enumerate}
\end{example}

\section{One-dimensional case}\label{section1dim}

In this section, we restrict simplicial complexes to the case of dimension $1$ which means there are only $0$-faces and $1$-faces in the complex, corresponding to vertices and edges in the underlying graph. In other word, we can consider a simple graph $G$ as a $1$-dimensional simplicial complex, denoted by $\Delta_G$.

\begin{definition}\label{Definition of graphic monomial arrangement}
    Let $G=([\ell], E)$ be a simple graph on $\ell$ vertices and denote the corresponding $1$-dimensional simplicial complex by $\Delta_G$.
    The arrangement associated to $\Delta_G$ with underlying graph $G$ is defined by
    \begin{align*}
        \mathcal{S}_G^q &\coloneqq \mathcal{S}_{\Delta_G}^q \\
        &=\{\ker{(x_i+ax_j)}\mid (i,j)\in E, a\in \mathbb{F}_q\}.
    \end{align*}
   
\end{definition}

The defining equation of this arrangement is 
$$Q(\mathcal{S}_G^q)=\prod_{i=1}^\ell x_i\prod_{(i,j)\in E}(x_i^{q-1}-x_j^{q-1}).$$

Following Theorem \ref{q-deletion-contraction}, we can see that each edge in $1$-dimensional simplicial complex is maximal, hence $q$-deletion-contraction holds for any edge in the graph. We restate Theorem \ref{q-deletion-contraction} using $\mathcal{S}_G^q$.

\begin{theorem}\label{q-deletion-contraction of GMA}
    Let $G=([\ell], E)$ be a simple graph on $\ell$ vertices. If $e=\{i_1, i_2\}$ is an edge, then
    $$\chi(\mathcal{S}_{G}^{q}, t)=\chi(\mathcal{S}_{G\setminus e}^{q}, t)-(q-1)\chi(\mathcal{S}_{G/e}^{q}, t).$$
\end{theorem}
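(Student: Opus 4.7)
The plan is to derive this as a direct specialization of Theorem \ref{q-deletion-contraction}, where the one-dimensional hypothesis makes the maximality condition automatic. Since $\Delta_{G}$ contains only $0$-faces and $1$-faces by construction, any edge $e = \{i_{1}, i_{2}\} \in E$ is automatically a maximal face of $\Delta_{G}$, so Theorem \ref{q-deletion-contraction} applies to every such $e$.

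What then remains is to check that the simplicial-complex operations $\Delta_{G} \setminus e$ and $\Delta_{G}/e$ coincide with the simplicial complexes $\Delta_{G \setminus e}$ and $\Delta_{G/e}$ coming from the graph operations. For deletion this is immediate, since $\Delta_{G} \setminus e$ has vertex set $[\ell]$ and $1$-faces $E \setminus \{e\}$, matching $\Delta_{G \setminus e}$ on the nose. For contraction, I would note that identifying $i_{1}$ with $i_{2}$ inside each face of $\Delta_{G}$ collapses the edge $\{i_{1}, i_{2}\}$ to the $0$-face $\{i_{1}\}$ and sends any pair of edges $\{i_{1}, j\}$, $\{i_{2}, j\}$ sharing a neighbor $j$ to the single edge $\{i_{1}, j\}$; the set-theoretic nature of simplicial faces automatically eliminates duplicates. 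This matches the graph-theoretic contraction $G/e$, which merges $i_{1}$ with $i_{2}$, removes $e$, and identifies parallel edges, giving $\Delta_{G}/e = \Delta_{G/e}$.

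Substituting these identifications into the conclusion of Theorem \ref{q-deletion-contraction} produces the displayed formula. I do not anticipate a serious obstacle: the statement is essentially a transcription of the previous theorem into graph-theoretic language, and the only genuine verification is the routine bookkeeping of the contraction step in the presence of common neighbors of $i_{1}$ and $i_{2}$. If desired, one could also reprove the identity directly by applying deletion--restriction to the $q-1$ hyperplanes $\ker(x_{i_{1}} + a x_{i_{2}})$ with $a \in \mathbb{F}_{q}^{\times}$ one at a time, exactly as in the proof of Theorem \ref{q-deletion-contraction}, but routing through the general result is the cleanest option.
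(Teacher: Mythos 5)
Your proposal is correct and matches the paper's own justification: the paper likewise observes that every edge of a $1$-dimensional complex is a maximal face and then simply restates Theorem \ref{q-deletion-contraction} in graph language. Your additional check that $\Delta_{G}\setminus e$ and $\Delta_{G}/e$ agree with $\Delta_{G\setminus e}$ and $\Delta_{G/e}$ is routine bookkeeping the paper leaves implicit, and it is carried out correctly.
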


Note that for the empty graph $G_0 =([\ell], \emptyset)$ on $\ell$ vertices, the characteristic polynomial is $\chi(\mathcal{S}_{G_0}^q,t)=(t-1)^\ell$, while the chromatic polynomial of the empty graph on $\ell$ vertices is $t^\ell$, by induction on the number of edges in the graph. we have the following proposition.

\begin{proposition}\label{prop4.3}
    Let $G=([\ell], E)$ be a simple graph on $\ell$ vertices, $\chi(G, t)$ be chromatic polynomial of $G$, then
    $$\chi(\mathcal{S}_{G}^{q}, t)=(q-1)^\ell\chi(G,\frac{t-1}{q-1}).$$
    {In particular, $\chi(\mathcal{S}_{G}^{q}, t)$ is a polynomial in $q$ and $t$.}
\end{proposition}

This formula shows the connection with graphic arrangements. So this kind of arrangements plays an important role between the graphic arrangements and $q$-deformations of the graphic arrangements.

Following this formula, we can directly check that this kind of arrangement satisfies  Conjecture \ref{main conjecture}.

\begin{theorem}\label{3}
    Generalized $q$-deformation derived from any $1$-dimensional simplicial complex satisfies Conjecture \ref{main conjecture}. In other word, let $G$ be a graph on $\ell$ vertices. Then $\chi(\mathcal{S}_G^q,t)$ is polynomial in $q$ and $t$, and satisfies 
    \begin{align*}
        \lim_{q \to 1} \frac{\chi(\mathcal{S}_G^q,q^s)}{(q-1)^\ell}=\chi(G,s).
    \end{align*}
\end{theorem}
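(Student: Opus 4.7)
My plan is to deduce this theorem directly from the preceding proposition, which already establishes the closed formula
\[
\chi(\mathcal{S}_{G}^{q}, t) = (q-1)^\ell \,\chi\!\left(G,\tfrac{t-1}{q-1}\right).
\]
The polynomiality claim is immediate: writing $\chi(G,x) = \sum_{k=0}^{\ell} c_k x^k$, I would expand
\[
(q-1)^\ell \,\chi\!\left(G,\tfrac{t-1}{q-1}\right) = \sum_{k=0}^{\ell} c_k (t-1)^k (q-1)^{\ell-k},
\]
which is visibly a polynomial in both $q$ and $t$.

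For the limit, the first step is to substitute $t = q^s$ into the proposition's formula, yielding
\[
\frac{\chi(\mathcal{S}_{G}^{q}, q^s)}{(q-1)^\ell} = \chi\!\left(G, \tfrac{q^s - 1}{q-1}\right).
\]
Then I would compute $\lim_{q \to 1} \tfrac{q^s - 1}{q-1} = s$, which is just the derivative of $q^s$ at $q = 1$ (equivalently the geometric-sum identity $\tfrac{q^s-1}{q-1} = 1 + q + \dots + q^{s-1}$ when $s$ is a positive integer). Since $\chi(G, \cdot)$ is a polynomial and hence continuous, the limit passes inside to give
\[
\lim_{q \to 1}\chi\!\left(G, \tfrac{q^s - 1}{q-1}\right) = \chi(G, s),
\]
which is exactly the desired identity.

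Since the proposition does all the real work, there is no serious obstacle left; the only minor subtlety is being explicit about what the variable $s$ means when forming $q^s$ and taking the limit. I would briefly note that one may treat $s$ as a formal real parameter and interpret the limit pointwise, or equivalently expand $\chi(G,\cdot)$ as a polynomial in its argument so that the computation becomes a finite sum of elementary limits of the form $\lim_{q\to 1}\bigl(\tfrac{q^s-1}{q-1}\bigr)^k = s^k$. Either interpretation gives the same conclusion and verifies Conjecture \ref{main conjecture} for every one-dimensional simplicial complex.
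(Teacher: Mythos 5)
Your proposal is correct and follows essentially the same route as the paper: both deduce the theorem from the preceding proposition's formula $\chi(\mathcal{S}_{G}^{q}, t) = (q-1)^\ell \chi(G,\frac{t-1}{q-1})$ by substituting $t=q^s$, cancelling $(q-1)^\ell$, and using $\lim_{q\to 1}\frac{q^s-1}{q-1}=s$ together with continuity of the polynomial $\chi(G,\cdot)$. Your added remarks on polynomiality and on the interpretation of $s$ are slightly more explicit than the paper's computation but do not change the argument.
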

\begin{proof}
Combining the definition of generalized $q$-deformation of simplicial complex and Proposition \ref{prop4.3}, we can compute
    \begin{align*}
        \lim_{q \to 1} \frac{\chi(\mathcal{S}_G^q,q^s)}{(q-1)^\ell} &=
        \lim_{q\to 1}\frac{\chi(\mathcal{S}_{\Delta^1}^q, q^s)}{(q-1)^\ell}\\ &= \lim_{q\to 1}\frac{\chi(\mathcal{S}_{G(\Delta^1)}^q, q^s)}{(q-1)^\ell}\\
        &= \lim_{q\to 1}\frac{(q-1)^\ell\chi(G(\Delta^1), \frac{q^s-1}{q-1})}{(q-1)^\ell}\\
        &= \lim_{q\to 1}\chi(G(\Delta^1), \frac{q^s-1}{q-1})\\
        &= \chi(G(\Delta^1), s)\\
        &= \chi(G,s).
    \end{align*}
\end{proof}

We shall look at the following statement as a corollary that identifies a kind of graph satisfying Conjecture \ref{q-deformation conjceture}.

\begin{corollary}
    All triangle-free graphs $G$ satisfy Conjecture \ref{q-deformation conjceture}.
\end{corollary}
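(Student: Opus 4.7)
The plan is to observe that for a triangle-free graph $G$, the clique complex $\mathcal{C}(G)$ coincides with the $1$-dimensional simplicial complex $\Delta_G$. Indeed, since $G$ has no triangle, every clique of $G$ has at most two vertices, so the faces of $\mathcal{C}(G)$ are precisely the vertices and edges of $G$, which is the definition of $\Delta_G$.

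Consequently, the two arrangements associated to $G$ agree: $\mathcal{A}_G^q = \mathcal{S}_{\mathcal{C}(G)}^q = \mathcal{S}_{\Delta_G}^q = \mathcal{S}_G^q$, by the definitions recalled in Section \ref{prelim} and Section \ref{section1dim}. In particular the characteristic polynomials coincide, $\chi(\mathcal{A}_G^q, t) = \chi(\mathcal{S}_G^q, t)$.

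Now Theorem \ref{3} applies directly: the underlying graph of $\Delta_G$ is $G$ itself, so $\chi(\mathcal{S}_G^q, t)$ is a polynomial in $q$ and $t$, and
\[
\lim_{q\to 1}\frac{\chi(\mathcal{A}_G^q, q^s)}{(q-1)^\ell} = \lim_{q\to 1}\frac{\chi(\mathcal{S}_G^q, q^s)}{(q-1)^\ell} = \chi(G, s),
\]
which is exactly the conclusion of Conjecture \ref{q-deformation conjceture} for $G$.

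There is no real obstacle here; the only point to verify carefully is the first identification of complexes, which follows tautologically from the triangle-free hypothesis. Everything else has already been established in Theorem \ref{3}.
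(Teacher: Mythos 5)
Your proof is correct and follows exactly the same route as the paper: identify the clique complex of a triangle-free graph with the $1$-dimensional complex $\Delta_G$ (so $\mathcal{A}_G^q=\mathcal{S}_G^q$) and then invoke Theorem \ref{3}. You simply spell out the identification in slightly more detail than the paper does.
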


\begin{proof}
    Observe that the $q$-deformation of graphic arrangement of a triangle-free graph is $\mathcal{S}_G^q$ since there is no clique of more than $2$ vertices. Then use the Theorem \ref{3} on the clique complex of this graph.
\end{proof}

\section{Graphic monomial arrangements}\label{section3}

In this section, we extend the field in $q$-deformations to any field with roots of unity. We follow the language of graph theory and name the extended class of arrangements by graphic monomial arrangements, referring to the reflection arrangements of monomial groups in \cite{orlik1992arrangements} when we consider in a wider range of fields. In this section, let $r$ be a positive integer. We consider arrangements over the field $\mathbb{K}$ containing a primitive $r$-th root $z\in\mathbb{K}$ of $1$.

\begin{definition}
    Let $G=([\ell], E)$ be a graph. Define the \textbf{graphic monomial arrangement} $\mathcal{M}(G,r)$ by
    \[ \mathcal{M}(G,r)=\{\{x_i-z^k x_j=0\}\mid (i,j)\in E, 1\leq k\leq r\}\cup\{\{x_i=0\}\mid i= 1, \dots, \ell\}, \] and the \textbf{simplified graphic monomial arrangement} $\mathcal{M}^0(G,r)$ by
    \[ \mathcal{M}^0(G,r)=\{\{x_i-z^k x_j=0\}\mid (i,j)\in E, 1\leq k\leq r\}.\]
\end{definition}

This setting contains at least four important classes. 
\begin{itemize}
    \item[(a)] $\mathbb{K}=\mathbb{F}_q$, $r=q-1$;
    \item[(b)] $\mathbb{K}=\mathbb{C}$, $r$ is an arbitrary positive integer;
    \item[(c)] $\mathbb{K}=\mathbb{R}$, $r=2$;
    \item[(d)] Any field $\mathbb{K}$ with $r=1$.
\end{itemize}
 The case (a) corresponds to the arrangement in the previous section. In particular, $\mathcal{S}_G^q=\mathcal{M}(G, q-1)$. In the case (b), if $G=K_\ell$ is the complete graph, the defining equations of $\mathcal{M}^0(K_\ell, r)$ and $\mathcal{M}(K_\ell, r)$ are $\prod_{1\leq i<j\leq\ell}(x_i^r-x_j^r)$, and $x_1 x_2\cdots x_\ell\prod_{1\leq i<j\leq\ell}(x_i^r-x_j^r)$, respectively, which are known to be the reflection arrangements of monomial groups (monomial arrangements). Case (c), is related to a class of subarrangements of reflection arrangements of type $B$ and $D$. In particular, $\mathcal{M}(K_\ell, 2)$ and $\mathcal{M}^0(K_\ell, 2)$ are equal to reflection arrangements of type $B_\ell$ and $D_\ell$, respectively. 
Case (d) is the classical graphic arrangement. More precisely, $\mathcal{M}^0(G, 1)=\mathcal{A}_G$ for any graph $G$. 

There are also families of free multiarrangements closely related, proposed by Hoge, Mano, R{\"o}hrle and Stump \cite{hoge2019freeness} concerning about reflections, with an integral expression of free basis given by Feigin, Wang and Yoshinaga \cite{feigin2025integral}.

Similar to the case of dimension $1$, graphic monomial arrangements have $q$-deletion-contraction. We restate Theorem \ref{q-deletion-contraction of GMA} in the language of graphic monomial arrangements.

\begin{theorem}
    Let $G=([\ell], E)$ be a simple graph on $\ell$ vertices and $\mathbb{K}$ be a field with primitive $r$-root of unity. If $e=\{i_1, i_2\}$ is an edge, then the graphic monomial arrangement $\mathcal{M}(G,r)$ in $\mathbb{K}^\ell$ satisfies
    $$\chi(\mathcal{M}(G,r), t)=\chi(\mathcal{M}({G\setminus e},r), t)- r \chi(\mathcal{M}({G/e},r), t).$$
    
\end{theorem}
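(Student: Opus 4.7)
The plan is to imitate the proof of Theorem \ref{q-deletion-contraction of GMA} by applying the deletion--restriction formula to the $r$ hyperplanes associated with the edge $e$, one at a time. Set $H_k := \{x_{i_1} - z^k x_{i_2} = 0\}$ for $k = 1, \ldots, r$; these are precisely the hyperplanes that lie in $\mathcal{M}(G, r)$ but not in $\mathcal{M}(G\setminus e, r)$. Letting $\mathcal{A}_0 := \mathcal{M}(G, r)$ and $\mathcal{A}_k := \mathcal{A}_{k-1} \setminus \{H_k\}$, we have $\mathcal{A}_r = \mathcal{M}(G \setminus e, r)$, and telescoping deletion--restriction will give
$$\chi(\mathcal{M}(G, r), t) = \chi(\mathcal{M}(G \setminus e, r), t) - \sum_{k=1}^{r} \chi(\mathcal{A}_{k-1}^{H_k}, t).$$
The problem then reduces to proving $\chi(\mathcal{A}_{k-1}^{H_k}, t) = \chi(\mathcal{M}(G/e, r), t)$ for each $k$.

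To establish this, the idea is to identify $\mathcal{A}_{k-1}^{H_k}$ with $\mathcal{M}(G/e, r)$ as arrangements in $H_k \cong \mathbb{K}^{\ell-1}$ via the substitution $x_{i_1} = z^k x_{i_2}$, letting $x_*$ denote the coordinate of the merged vertex of $G/e$, identified with $x_{i_2}$. One then checks a handful of cases: the coordinate hyperplanes $\{x_{i_1} = 0\}$ and $\{x_{i_2} = 0\}$ both restrict to $\{x_* = 0\}$; for a vertex $j \notin \{i_1,i_2\}$ adjacent to $i_1$, the $r$ hyperplanes $\{x_j = z^m x_{i_1}\}$ restrict to $\{x_j = z^{m+k} x_*\}$, which, as $m$ varies, sweep out the same $r$ hyperplanes as those arising from the edge $(j, *)$ in $G/e$; when $j$ is adjacent to both $i_1$ and $i_2$ in $G$, the two groups of $r$ hyperplanes collapse to the same $r$ restrictions, matching the single merged edge $(j, *)$ in the simple graph $G/e$; and the hyperplanes $H_m$ with $m > k$ still present in $\mathcal{A}_{k-1}$ restrict to $\{x_* = 0\}$ because $z^k \neq z^m$, coinciding with a hyperplane already listed.

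The main technical obstacle is the careful bookkeeping in the last two points: confirming that each coinciding hyperplane appears exactly once in the set $\mathcal{A}_{k-1}^{H_k}$ and that no extraneous hyperplanes are introduced by the un-deleted $H_m$. This relies on $z$ being a primitive $r$-th root of unity, so that $\{z^m : 1 \leq m \leq r\}$ forms a cyclic group and the shift $m \mapsto m + k \pmod r$ permutes this set. Once the set equality $\mathcal{A}_{k-1}^{H_k} = \mathcal{M}(G/e, r)$ is verified, the theorem follows immediately from the telescoped identity above.
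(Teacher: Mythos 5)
Your proposal is correct and follows exactly the route the paper intends: its proof of the underlying $q$-deletion--contraction theorem is the single sentence ``use deletion--restriction repeatedly on the hyperplanes of the form $x_{i_1}+ax_{i_2}=0$,'' and you have simply carried out the telescoping and the case-by-case identification of each restriction $\mathcal{A}_{k-1}^{H_k}$ with $\mathcal{M}(G/e,r)$ in full detail. The bookkeeping you flag (coordinate hyperplanes and the remaining $H_m$ all collapsing onto $\{x_*=0\}$, and the two families of $r$ hyperplanes from a common neighbor of $i_1,i_2$ merging into one) is handled correctly, so nothing is missing.
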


We also have

\begin{proposition}
    Let $G=([\ell], E)$ be a simple graph on $\ell$ vertices, $\chi(G, t)$ be chromatic polynomial of $G$, then
   $$\chi(\mathcal{M}(G,r),t)=r^\ell\chi(G,\frac{t-1}{r}).$$
    {In particular, $\chi(\mathcal{M}(G,r), t)$ is a polynomial in $r$ and $t$.}
\end{proposition}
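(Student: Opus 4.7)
The plan is to mirror the proof strategy used for the earlier proposition on $\mathcal{S}_G^q$, namely induction on the number of edges $|E|$ of $G$, using the deletion-contraction formula stated in the preceding theorem together with the classical deletion-contraction recursion $\chi(G,s)=\chi(G\setminus e,s)-\chi(G/e,s)$ for the chromatic polynomial.

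For the base case, I would take $G_{0}=([\ell], \emptyset)$. The arrangement $\mathcal{M}(G_{0},r)$ reduces to the Boolean arrangement $\{x_{i}=0\mid i=1,\dots,\ell\}$, whose characteristic polynomial is $(t-1)^{\ell}$. Since $\chi(G_{0},s)=s^{\ell}$, the identity $r^{\ell}\chi(G_{0},(t-1)/r)=r^{\ell}\bigl((t-1)/r\bigr)^{\ell}=(t-1)^{\ell}$ holds, verifying the base case.

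For the inductive step, fix an edge $e\in E$ and assume the formula has been established for both $G\setminus e$ (which still has $\ell$ vertices) and $G/e$ (which has $\ell-1$ vertices). Applying the deletion-contraction theorem for graphic monomial arrangements and then the inductive hypothesis,
\begin{align*}
\chi(\mathcal{M}(G,r),t) &= \chi(\mathcal{M}(G\setminus e,r),t)-r\,\chi(\mathcal{M}(G/e,r),t)\\
&= r^{\ell}\chi\!\left(G\setminus e,\tfrac{t-1}{r}\right)-r\cdot r^{\ell-1}\chi\!\left(G/e,\tfrac{t-1}{r}\right)\\
&= r^{\ell}\left[\chi\!\left(G\setminus e,\tfrac{t-1}{r}\right)-\chi\!\left(G/e,\tfrac{t-1}{r}\right)\right]\\
&= r^{\ell}\chi\!\left(G,\tfrac{t-1}{r}\right),
\end{align*}
where the last equality is the classical deletion-contraction identity for the chromatic polynomial evaluated at $s=(t-1)/r$. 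The ``in particular'' clause follows immediately, because $\chi(G,s)$ is a polynomial in $s$ with integer coefficients, so after substituting $s=(t-1)/r$ and multiplying by $r^{\ell}$ one obtains a polynomial in $r$ and $t$.

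There is essentially no serious obstacle here: the argument is a direct transcription of the proof already sketched for $\mathcal{S}_G^{q}$, with the factor $(q-1)$ replaced by $r$ throughout. The only point requiring a little care is tracking the vertex counts of $G\setminus e$ and $G/e$ so that the exponents of $r$ match correctly when the inductive hypothesis is applied, and noting that $G/e$ may produce multiple edges which are identified (this does not affect either side since the chromatic polynomial and the defining equations both depend only on the underlying simple graph obtained after contraction).
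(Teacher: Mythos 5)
Your proof is correct and follows essentially the same route the paper takes: induction on the number of edges, with the Boolean arrangement of the empty graph (characteristic polynomial $(t-1)^\ell$) as the base case and the deletion--contraction theorem for $\mathcal{M}(G,r)$ combined with the classical deletion--contraction recursion for the chromatic polynomial in the inductive step. The bookkeeping of vertex counts and the remark about parallel edges after contraction are handled appropriately, so there is nothing to add.
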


It is easy to see if $G_0$ is the empty graph, $\mathcal{M}(G_0,r)$ and $\mathcal{M}^0(G_0,r)$ correspond to Boolean arrangement and empty arrangement respectively. We have 

$$\chi(\mathcal{M}(G_0, r), t)=\chi(\mathcal{M}^0(G_0, r), t-1).$$

When $r=1$, by induction on the number of edges of the graph, it leads to a well-known result: $\chi(\mathcal{M}^0(G, 1), t-1)=\chi(\mathcal{M}(G, 1), t)$ where $\mathcal{M}^0(G, 1)$ is graphic arrangement on $G$. For general $r>0$, we will see that $\mathcal{M}(G, r)$ inherits nice properties of $\mathcal{M}(G, 1)$. While simplified graphic monomial arrangement $\mathcal{M}^0(G,r)$ does not satisfy $q$-deletion-contraction for $r>1$, many results of $\mathcal{M}^0(G,1)$ cannot be extended to general $r$.

Next we will discuss the supersolvability and freeness of graphic monomial arrangements of chordal graphs. Following Theorem \ref{graphical arrangement freeness}, we aim to show

\begin{theorem}\label{chordality is freeness}
    Let $G=([\ell], E)$ be a simple graph on $\ell$ vertices, $\mathbb{K}$ be a field with $r$-th root of unity. Then $\mathcal{M}(G,r)$ is free if and only if $G$ is chordal.
\end{theorem}

Theorem \ref{graphical arrangement freeness} showed that all graphic arrangements of chordal graphs and $q$-deformations of them are supersolvable. The filtration for graphic arrangement of chordal graph is given by 
$$\mathcal{A}_i=\mathcal{A}_{G_i}$$
and the filtration for $q$-deformation is given by
$$\mathcal{A}_i=\mathcal{A}_{G_i}^q$$
where $G_i$ is the subgraph induced by $\{v_1,v_2,\cdots,v_i\}$ with perfect elimination ordering $(v_1,v_2,\cdots,v_\ell)$ of vertices.

One can check that the same construction also works for $\mathcal{M}(G,r)$ if $G$ is a chordal graph.

\begin{theorem}\label{filtration}
    Let $G$ be a chordal graph with perfect elimination ordering $(v_1,v_2,\cdots,v_\ell)$, then the arrangement $\mathcal{M}(G,r)$ is supersolvable with filtration
    $$\mathcal{A}_i=\mathcal{M}({G_i},r)$$
    where $G_i$ is the subgraph induced by $\{v_1,v_2,\cdots,v_i\}$.
\end{theorem}

\begin{proof}
    Directly check the definition. Choose a pair of hyperplanes in $\mathcal{A}_r\setminus\mathcal{A}_{r-1}$, say $x_r-z^{r_i}x_i=0$ and $x_r-z^{r_j}x_j=0$, where $v_i$ and $v_j$ are vertices adjacent to $v_r$ such that $i,j<r$. By chordality of the graph, $(v_i,v_j)\in E(G)$, hence $x_i-z^{r_i-r_j}x_j=0$ is a hyperplane in $\mathcal{A}_{r-1}$ containing the intersection of both hyperplanes, satisfying the definition of filtration of a supersolvable arrangement.
\end{proof}

\begin{proof}[Proof of Theorem \ref{chordality is freeness}]

According to Theorem \ref{filtration}, supersolvability implies freeness of an arrangement, we proved that chordality implies freeness.

Conversely, if $\mathcal{M}(G,r)$ is free but $G$ is not chordal, we can choose a minimal cycle without chord, say $C=\{v_1,v_2,\cdots,v_i\}$, where $i\geq 4$. Consider the localization of $\mathcal{A}$ on $X=\{x_1-x_2=0, x_2-x_3=0, \cdots, x_{i-1}-x_i=0, x_i-x_1=0\}$. The localization $\mathcal{A}_X=\{H\in\mathcal{A}\mid \bigcap_{H'\in X} H'\subseteq H\}$ is exactly $X$, which is isomorphic to the graphic arrangement of cycle graph $C_i$. By Theorem \ref{graphical arrangement freeness}, $\mathcal{A}_X$ cannot be free since $C_i$ is not chordal. Following the fact in \cite{orlik1992arrangements}, the localization of a free arrangement is also free, hence $\mathcal{M}(G,r)$ cannot be free, which is contradictory to the assumption. According to the previous statement, we proved Theorem \ref{chordality is freeness}.
\end{proof}

Suyama-Tsujie \cite{suyama2019vertex-weighted-dcg} and Nian-Tsujie-Uchiumi-Yoshinaga \cite{nian2024qdeformationchromaticpolynomialsgraphical} gave the bases for graphic arrangements of chordal graphs and their $q$-deformations respectively. We give a free basis for graphic monomial arrangement of chordal graph $G$ following the method in both theses. 

Let $G$ be a chordal graph with a perfect elimination ordering $(v_{1}, \dots, v_{\ell})$ and $(x_{1}, \dots, x_{\ell})$ the corresponding coordinates. 
Define the sets $C_{\geq k}$ and $E_{<k}$ by 
\begin{align*}
C_{\geq k} &\coloneqq \{k\} \cup \Set{i \in [\ell] | \exists\text{ a path } v_{k}v_{j_{1}}\cdots v_{j_{n}}v_{i} \text{ such that } k < j_{1} < \dots < j_{n} < i}, \\
E_{<k} &\coloneqq \Set{j \in [\ell] | j < k \text{ and } \{v_{j},v_{k}\} \in E_{G}}. 
\end{align*}

Let $\Delta(x_{1}, \dots, x_{k})$ denote the Vandermonde determinant: 
\begin{align*}
\Delta(x_{1}, \dots, x_{k}) \coloneqq \begin{vmatrix}
1 & x_{1} & x_{1}^{2} & \dots & x_{1}^{k-1} \\
1 & x_{2} & x_{2}^{2} &  \dots & x_{2}^{k-1} \\
\vdots & \vdots & \vdots &  & \vdots \\
1 & x_{k} & x_{k}^{2} & \dots & x_{k}^{k-1}
\end{vmatrix} 
= \prod_{1 \leq i < j \leq k}(x_{j}-x_{i}). 
\end{align*}

When $E_{<k} = \{j_{1}, \dots, j_{m}\}$ with $j_{1} < \dots < j_{m}$, 
\begin{align*}
\Delta(E_{<k}) \coloneqq \Delta(x_{j_{1}}, \dots, x_{j_{m}}) 
\quad \text{ and } \quad
\Delta(E_{<k}, x_{i}) \coloneqq \Delta(x_{j_{1}}, \dots, x_{j_{m}}, x_{i}). 
\end{align*}

Let $\Delta_{q}(x_{1}, \dots, x_{k}) \in \mathbb{F}_{q}[x_{1}, \dots, x_{k}]$ denote the determinant of the Moore matrix. 
Namely 
\begin{align*}
\Delta_{q}(x_{1}, \dots, x_{k}) &= \begin{vmatrix}
x_{1} & x_{1}^{q} & x_{1}^{q^{2}} & \dots & x_{1}^{q^{k-1}} \\
x_{2} & x_{2}^{q} & x_{2}^{q^{2}} &  \dots & x_{2}^{q^{k-1}} \\
\vdots & \vdots & \vdots &  & \vdots \\
x_{k} & x_{k}^{q} & x_{k}^{q^{2}} & \dots & x_{k}^{q^{k-1}}
\end{vmatrix}\\ 
&= \prod_{i = 1}^{k}\prod_{c_{1}, \dots, c_{i-1} \in \mathbb{F}_{q}}(c_{1}x_{1} + \dots + c_{i-1}x_{i-1} + x_{i}),
\end{align*}
then we have the following theorem.

\begin{theorem}[Suyama-Tsujie \cite{suyama2019vertex-weighted-dcg}, Nian-Tsujie-Uchiumi-Yoshinaga \cite{nian2024qdeformationchromaticpolynomialsgraphical}]\label{freeness1}

$$\{\theta_{k} = \sum_{i \in C_{\geq k}}\dfrac{\Delta(E_{<k}, x_{i})}{\Delta(E_{<k})}\partial_{i}\mid k=1,2,\cdots,\ell\}$$
gives a free basis of $\mathcal{A}_G$ and
$$\{\theta^q_{k} = \sum_{i \in C_{\geq k}}\dfrac{\Delta_q(E_{<k}, x_{i})}{\Delta_q(E_{<k})}\partial_{i}\mid k=1,2,\cdots,\ell\}$$
gives a free basis of $\mathcal{A}_G^q$.
\end{theorem}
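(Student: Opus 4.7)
The plan is to apply Saito's criterion to both basis candidates. This reduces the theorem to verifying (i) that each $\theta_k$ (resp.\ $\theta_k^q$) lies in $D(\mathcal{A}_G)$ (resp.\ $D(\mathcal{A}_G^q)$), and (ii) that the Saito determinant equals $Q(\mathcal{A}_G)$ (resp.\ $Q(\mathcal{A}_G^q)$) up to a nonzero constant.

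For step (i) in the classical case, the standard Vandermonde identity
$$\frac{\Delta(E_{<k}, x_i)}{\Delta(E_{<k})} = \prod_{j \in E_{<k}}(x_i - x_j)$$
rewrites $\theta_k = \sum_{i \in C_{\geq k}} \prod_{j \in E_{<k}}(x_i - x_j)\, \partial_i$. For each edge $\{v_\alpha, v_\beta\} \in E$, I would compute $\theta_k(x_\alpha - x_\beta)$ and split into cases according to whether $\alpha, \beta \in C_{\geq k}$. The ``both in'' case is immediate (a difference of two products factors through $x_\alpha - x_\beta$) and the ``both out'' case is trivial. The mixed case reduces to the combinatorial claim that if $\alpha \in C_{\geq k}$, $\beta \notin C_{\geq k}$, and $\{v_\alpha, v_\beta\} \in E$, then $\beta \in E_{<k}$, which forces $(x_\alpha - x_\beta)$ to appear in the remaining product.

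I expect this combinatorial claim to be the main obstacle. I would prove it by induction on the length of a witnessing increasing path $v_k = v_{i_0}, v_{i_1}, \ldots, v_{i_n} = v_\alpha$. The PEO property at $v_\alpha$ ensures that $v_{i_{n-1}}$ and $v_\beta$, both smaller-indexed neighbors of $v_\alpha$, are adjacent. According to whether $\beta > i_{n-1}$, $\beta = i_{n-1}$, or $\beta < i_{n-1}$, one either builds a new increasing path reaching $v_\beta$ (forcing $\beta \in C_{\geq k}$, contradiction) or shrinks the witnessing path and re-applies the hypothesis to the edge $\{v_{i_{n-1}}, v_\beta\}$; the base case $\alpha = k$ is direct. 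For step (ii), observe $C_{\geq k} \subseteq \{k, k+1, \ldots, \ell\}$, so the coefficient matrix $(f_{kj})$ is upper triangular and
$$\det(f_{kj}) = \prod_{k=1}^{\ell} \prod_{j \in E_{<k}}(x_k - x_j) = \pm \prod_{(i,j) \in E}(x_j - x_i) = \pm Q(\mathcal{A}_G),$$
so Saito's criterion gives the free basis.

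The $q$-case follows the same skeleton, using the Moore factorization
$$\frac{\Delta_q(E_{<k}, x_i)}{\Delta_q(E_{<k})} = \prod_{c_1, \dots, c_m \in \mathbb{F}_q}(c_1 x_{j_1} + \dots + c_m x_{j_m} + x_i)$$
in place of the Vandermonde product. For step (i), I would test $\theta_k^q$ against each hyperplane form $L = \sum_{j=1}^{r} a_{i_j} x_{i_j}$ indexed by a clique $\{i_1, \ldots, i_r\}$ of $G$, again partitioning the indices by membership in $C_{\geq k}$. The combinatorial lemma, together with the clique-extension property supplied by the PEO, places every $i_j \notin C_{\geq k}$ inside $E_{<k}$, so the corresponding linear factor of $L$ is already exhibited in the Moore expansion, yielding $\theta_k^q(L) \in (L)$. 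Step (ii) is the same upper-triangular determinant calculation, whose diagonal product recovers $Q(\mathcal{A}_G^q)$ directly from Moore's expansion.
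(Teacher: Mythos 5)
Your proposal is correct and follows essentially the same route as the cited source proofs and as this paper's own proof of the analogous statement for $\mathcal{M}(G,r)$: verify membership in the derivation module by a case analysis on how an edge (resp.\ clique) meets $C_{\geq k}$, with the key combinatorial step being that a neighbor outside $C_{\geq k}$ of a vertex inside it must lie in $E_{<k}$, and then conclude via Saito's criterion using the upper-triangular coefficient matrix. The paper states Theorem \ref{freeness1} only as a citation, but the argument it gives for the graphic monomial case is exactly the one you sketch.
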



As for graphic monomial arrangement, we follow the method in both theses. Let $\Delta^{(r)}_1(x_{1}, \dots, x_{k}) \in \mathbb{K}[x_{1}, \dots, x_{k}]$ denote the determinant of the following matrix.

\begin{align*}
\Delta^{(r)}_1(x_{1}, \dots, x_{k}) &= \begin{vmatrix}
x_{1} & x_{1}^{r+1} & x_{1}^{2r+1} & \dots & x_{1}^{(k-1)r+1} \\
x_{2} & x_{2}^{r+1} & x_{2}^{2r+1} & \dots & x_{2}^{(k-1)r+1} \\
\vdots & \vdots & \vdots &  & \vdots \\
x_{k} & x_{k}^{r+1} & x_{k}^{2r+1} & \dots & x_{k}^{(k-1)r+1}
\end{vmatrix}\\ 
&= \prod_{i=1}^k x_i \prod_{1\leq i < j \leq k} (x_j^{r}-x_i^{r}).\\ 
\end{align*}
Following the same notations, we have the following theorem.
\begin{theorem}
    Let $G$ be a chordal graph with perfect elimination ordering $(v_1,v_2,\cdots,v_\ell)$, $z$ be a primitive $r$-root in $\mathbb{K}$, then $\mathcal{M}(G,r)$ is a free arrangement and
    $$\{\theta_{k} = \sum_{i \in C_{\geq k}}\dfrac{\Delta^{(r)}_1(E_{<k}, x_{i})}{\Delta^{(r)}_1(E_{<k})}\partial_{i}\mid k=1,2,\cdots,\ell\}$$
where $\Delta_1(E_{<1})=1$ gives a free basis of $\mathcal{M}(G,r)$. 

As a result, if the exponents of $\mathcal{A}_G$ is $(e_1,e_2,\cdots,e_\ell)$, the exponents of $\mathcal{M}(G,r)$ is $(re_1+1,re_2+1,\cdots,re_\ell+1)$.
\end{theorem}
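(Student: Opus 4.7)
The plan is to verify the claimed basis via Saito's criterion, adapting the approach of Suyama--Tsujie \cite{suyama2019vertex-weighted-dcg} and Nian--Tsujie--Uchiumi--Yoshinaga \cite{nian2024qdeformationchromaticpolynomialsgraphical}. First, using the product expansion of $\Delta_1^{(r)}$, I would simplify
\[
\theta_k(x_i)=\frac{\Delta_1^{(r)}(E_{<k},x_i)}{\Delta_1^{(r)}(E_{<k})}=\begin{cases}x_i\prod_{m\in E_{<k}}(x_i^r-x_m^r) & \text{if }i\in C_{\geq k},\\ 0 & \text{otherwise,}\end{cases}
\]
so that $\deg\theta_k=1+r|E_{<k}|=re_k+1$, where $e_k=|N_{G_k}(v_k)|$ is the $k$-th exponent of $\mathcal{A}_G$ by Stanley's theorem, and $\sum_k\deg\theta_k=\ell+r|E|=\deg Q(\mathcal{M}(G,r))$. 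Note that $C_{\geq k}\subseteq\{k,k+1,\dots,\ell\}$ and $E_{<k}\subseteq\{1,\dots,k-1\}$, so these two sets are always disjoint.

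The combinatorial heart is an adjacency lemma which I would establish iteratively from the perfect elimination ordering: for any edge $(i,j)\in E$ and any $k$, if $i\in C_{\geq k}$ then $j\in C_{\geq k}\cup E_{<k}$. The idea is that if $j\notin C_{\geq k}$ then $j<i$, and applying PEO repeatedly along the monotone path $v_kv_{j_1}\cdots v_{j_n}v_i$ forces $(v_{j_l},v_j)\in E$ for every $l$, in particular $(v_k,v_j)\in E$, which places $j$ in $C_{\geq k}$ when $j>k$ and in $E_{<k}$ when $j<k$. With this lemma in hand, verification of $\theta_k\in D(\mathcal{M}(G,r))$ is routine: $x_i\mid\theta_k(x_i)$ is built into the formula, and for each hyperplane $\ker(x_i-z^ax_j)$ with $(i,j)\in E$, the divisibility $(x_i-z^ax_j)\mid\theta_k(x_i)-z^a\theta_k(x_j)$ follows by case analysis on whether $i$ and $j$ lie in $C_{\geq k}$ or $E_{<k}$.

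The same case analysis delivers Saito's criterion. For each edge $(i,j)\in E$ and each $1\leq a\leq r$, specialize $x_j=z^ax_i$; when both $i,j\in C_{\geq k}$, the entries $M_{k,i}$ and $M_{k,j}$ of the Saito matrix $M=(\theta_k(x_j))_{k,j}$ satisfy $M_{k,j}=z^aM_{k,i}$ after the substitution (since $x_j^r=x_i^r$ makes the two products over $E_{<k}$ coincide), while in the mismatched cases the adjacency lemma forces $j\in E_{<k}$ or $i\in E_{<k}$, and then (using $z^{ar}=1$) both $M_{k,i}$ and $M_{k,j}$ vanish after the substitution. Hence column $j$ of $M$ becomes $z^a$ times column $i$, so $\det M$ vanishes under the specialization and $(x_i-z^ax_j)\mid\det M$. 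Together with the obvious factor $x_i$ from each column, this yields $Q(\mathcal{M}(G,r))\mid\det M$, and equality up to a nonzero constant follows from the degree matching above. Saito's criterion then concludes both freeness of $\mathcal{M}(G,r)$ and the basis assertion; the exponents $(re_1+1,\dots,re_\ell+1)$ follow, consistent with $\chi(\mathcal{M}(G,r),t)=r^\ell\chi(G,(t-1)/r)=\prod_k(t-(re_k+1))$ from the preceding proposition.

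I expect the main obstacle to be the adjacency lemma: the iterative PEO argument requires care, and the precise interplay between $C_{\geq k}$ and $E_{<k}$ along the edge $(i,j)$ is exactly what makes the rest of the proof go through. Once that lemma is in place, the remaining verifications amount to repackaging the Vandermonde/Moore computations from the cited papers, with the extra factor $x_i$ carried through the Saito matrix uniformly.
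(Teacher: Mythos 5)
Your proposal follows essentially the same route as the paper: the same candidate basis, the same PEO-based adjacency argument relating $C_{\geq k}$ and $E_{<k}$ along an edge, the same case analysis establishing $\theta_k\in D(\mathcal{M}(G,r))$, and Saito's criterion to conclude. The one small difference is the final step: the paper computes $\det(\theta_k(x_j))$ exactly as the product of the diagonal entries using the upper-triangularity you already noted (via $C_{\geq k}\subseteq\{k,\dots,\ell\}$), whereas your divisibility-plus-degree argument additionally needs the one-line observation that $\det M\neq 0$ (immediate from that same triangularity, since each diagonal entry $x_k\prod_{m\in E_{<k}}(x_k^r-x_m^r)$ is nonzero) before Saito's criterion applies.
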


\begin{proof}
    Let $e=\{v_i,v_j\}$ be an edge of $G$ with $i < j$. There are two types of hyperplanes in $\mathcal{M}(G,r)$. One is the kernel of $\alpha=x_i$ and it is easy to check in this case $\theta_k(\alpha)\in(\alpha)$. 

The other type is the kernel of $\alpha = x_i - z^ax_j$. If $e \cap C_{\geq k} = \varnothing$, then $\theta_{k}(\alpha) = 0$. 

Suppose that $e \cap C_{\geq k} \neq \varnothing$. There are only two cases, $e \cap C_{\geq k} =\{v_j\}$ and $e \cap C_{\geq k} =\{v_i,v_j\}$ since $v_i$ and $v_j$ are adjacent so if there is an ascending path from $v_k$ to $v_i$, then there is also an ascending path to $v_j$ by adding one node.

Note that if $e \cap C_{\geq k} =\{v_j\}$, then $i\in E_{<k}$ since there is an ascending path $v_kv_{k_1}v_{k_2}\dots,v_{k_n}v_j$ in the perfect elimination ordering, hence $v_i$ and $v_{k_n}$ are adjacent. Hence $i<k_n$ since otherwise we can find a new ascending path containing $i$, contradicting the assumption. By repeating this argument, 

we have $i<k$ and $v_i$ is adjacent to $v_k$, i.e., $i\in E_{<k}$.

$$\theta_{k}(\alpha) = -z^a\dfrac{\Delta^{(r)}_1(E_{<k}, x_{j})}{\Delta^{(r)}_1(E_{<k})} = -z^ax_j \prod_{m\in E_{<k}}(x_j^{r}-x_m^{r})$$

one can show that $\theta_{k}(\alpha)$ is a multiple of $\alpha$ since  $x_i-z^ax_j$ is a factor of $x_j^{r}-x_i^{r}$ and also a factor of $\theta_{k}(\alpha)$ by $i\in E_{<k}$.

The last case is $e \cap C_{\geq k} =\{v_i,v_j\}$, by computation,

\begin{align*}
    \theta_{k}(\alpha) &= -z^a\dfrac{\Delta^{(r)}_1(E_{<k}, x_{j})}{\Delta^{(r)}_1(E_{<k})}+\dfrac{\Delta^{(r)}_1(E_{<k}, x_{i})}{\Delta^{(r)}_1(E_{<k})}\\
    & \equiv 0 \pmod {x_i-z^ax_j}
\end{align*}

by letting $x_i=z^ax_j$, then one can show that $\alpha=x_i-z^ax_j$ is a factor of $\theta_k(\alpha)$.

By previous arguments, one can show that $\theta_1, \theta_2,\cdots,\theta_\ell$ are all derivations in $D(\mathcal{M}(G,r))$.

To check if $\theta_1, \theta_2,\cdots,\theta_\ell$ give a free basis of $D(\mathcal{M}(G,r))$, we use Saito's criterion \cite{saito1980theory}, the Saito's matrix $(\theta_i x_j)_{i,j}$ is upper triangular since $\theta_i x_j=0$ for any pair $i>j$. The determinant of Saito's matrix is the product of diagonal entries, i.e., $\prod_{i=1}^\ell \theta_i x_i$, a direct computation shows

\begin{align*}
    \prod_{i=1}^\ell \theta_i x_i &= \prod_{i=1}^\ell \dfrac{\Delta^{(r)}_1(E_{<i}, x_{i})}{\Delta^{(r)}_1(E_{<i})}\partial_{i} x_i\\
    &= \prod_{i=1}^\ell \dfrac{\Delta^{(r)}_1(E_{<i}, x_{i})}{\Delta^{(r)}_1(E_{<i})}\\
    &= \prod_{i=1}^\ell (x_i\prod_{j\in E_{<i}} (x_i^{r}-x_j^{r}))\\
    &= \prod_{i=1}^\ell x_i \prod_{\{i,j\}\in E(G)}(x_i^{r}-x_j^{r})\\
    &= Q(\mathcal{M}(G,r))    
\end{align*}
since $x_i^{r}-x_j^{r}=\prod_{k=0}^{r-1}(x_i-z^k x_j)$ in $\mathbb{K}$. Hence $\{\theta_1, \theta_2,\cdots,\theta_\ell\}$ is a free basis. 
\end{proof}

\section{Acknowledgement}
The author would like to express sincere gratitude to the author's academic adviser Professor Masahiko Yoshinaga for proposing this stimulating problem and for the insightful guidance throughout this work. The author is also grateful to Professor Takuro Abe and Professor Paul M{\"u}cksch for the constructive discussions we shared while working on this problem.

\printbibliography
\end{document}